\pgfplotsset{compat = 1.13,
	colormap name = viridis,
	unbounded coords = jump}
\tikzset{every picture/.style={/utils/exec={\normalfont}}}
\definecolor{myRed}{HTML}{E34A33}
\definecolor{myBlue}{HTML}{0571B0}
\definecolor{myBrown}{HTML}{A6611A}
\newfont{\Bb}{msbm10 scaled\magstep0}
\def\IR{\mathbb R}
\def\IC{\mathbb C}
\def\II{\mathbb I}
\def\IL{\mathbb L}
\def\IV{\mathbb V}
\def\IW{\mathbb W}
\newtheorem{lemma}{Lemma}[section]
\newtheorem{definition}{Definition}[section]
\newtheorem{proposition}{Proposition}[section]
\newtheorem{example}{Example}[section]
\newtheorem{remark}{Remark}[section]
\newcommand{\blem}{\begin{lemma}}
	\newcommand{\elem}{\end{lemma}}
\newcommand{\bea}{\begin{eqnarray*}}
	\newcommand{\eea}{\end{eqnarray*}}
\newcommand{\bA}{{\mathbf A}}
\newcommand{\bB}{{\mathbf B}}
\newcommand{\bC}{{\mathbf C}}
\newcommand{\bD}{{\mathbf D}}
\newcommand{\bE}{{\mathbf E}}
\newcommand{\bJ}{{\mathbf J}}
\newcommand{\bL}{{\mathbf L}}
\newcommand{\bM}{{\mathbf M}}
\newcommand{\bI}{{\mathbf I}}
\newcommand{\bH}{{\mathbf H}}
\newcommand{\bW}{{\mathbf W}}
\newcommand{\bR}{{\mathbf R}}
\newcommand{\bX}{{\mathbf X}}
\newcommand{\bx}{{\mathbf x}}
\newcommand{\by}{{\mathbf y}}
\newcommand{\bu}{{\mathbf u}}
\newcommand{\bV}{{\mathbf V}}
\newcommand{\bc}{{\mathbf c}}
\newcommand{\bfe}{{\mathbf e}}
\newcommand{\bn}{{\mathbf n}}
\newcommand{\bd}{{\mathbf d}}
\newcommand{\bb}{{\mathbf b}}
\newcommand{\bp}{{\mathbf p}}
\newcommand{\bq}{{\mathbf q}}
\newcommand{\bh}{{\mathbf h}}
\newcommand{\bv}{{\mathbf v}}
\newcommand{\bw}{{\mathbf w}}
\newcommand{\br}{{\mathbf r}}
\newcommand{\bfz}{{\mathbf 0}}
\newcommand{\cC}{ {\cal C} }
\newcommand{\cK}{ {\cal K} }
\newcommand{\cH}{ {\cal H} }
\newcommand{\cN}{ {\cal N} }
\newcommand{\cV}{ {\cal V} }
\newcommand{\cS}{ {\cal S} }
\newcommand{\cT}{ {\cal T} }
\newcommand{\cW}{ {\cal W} }
\newcommand{\Si}{ \boldsymbol{\Sigma} }
\newcommand{\bPi}{ \boldsymbol{\Pi} }
\newcommand{\bpi}{ \boldsymbol{\pi} }
\newcommand{\bmu}{ \boldsymbol{\mu} }
\newcommand{\blambda}{ \boldsymbol{\lambda} }
\newcommand{\bTheta}{ \boldsymbol{\Theta} }
\newcommand{\bPhi}{ \boldsymbol{\Phi} }
\newcommand{\bepsilon}{ \boldsymbol{\epsilon} }
\newcommand{\bbeta}{ \boldsymbol{\eta} }
\newcommand{\bPsi}{ \boldsymbol{\Psi} }
\newcommand{\bgamma}{\boldsymbol{\gamma}}
\newcommand{\bell}{\boldsymbol{\ell}}
\newcommand{\bDelta}{\boldsymbol{\Delta}}
\newcommand{\bLambda}{\boldsymbol{\Lambda}}
\newcommand{\bGamma}{ \boldsymbol{\Gamma} }
\newcommand{\sIL}{{\mathbb L}_s}
\newlength{\boxedparwidth}  \setlength{\boxedparwidth}{.90\textwidth}
\begin{document}
  

\title{Factorization of the Loewner matrix pencil and its consequences}
  
\author[$\ast$]{Qiang Zhang}
\affil[$\ast$]{Department of Electrical and Computer Engineering, Rice University, Houston, Texas 77005.\authorcr
  \email{qz18@rice.edu}}
  
\author[$\dagger$]{Ion Victor Gosea}
\affil[$\dagger$]{Max Planck Institute for Dynamics of Complex Technical Systems,
	Sandtorstr. 1, 39106 Magdeburg, Germany.\authorcr
  \email{gosea@mpi-magdeburg.mpg.de}, \orcid{0000-0003-3580-4116}}
  
\author[$\dagger$]{Athanasios C. Antoulas}
\affil[$\dagger$]{Department of Electrical and Computer Engineering, Rice University, Houston, Texas 77005, and
	Max-Planck Institute for the Dynamics of Complex Technical Systems,
	D-39106 Magdeburg.\authorcr
	\email{aca@rice.edu}}  
  
\shorttitle{Loewner matrix  pencil factorization and sensitivity}
\shortauthor{Q. Zhang, I. V. Gosea, A. C. Antoulas}
\shortdate{}

\subtitle{\small Dedicated to Professor Volker Mehrmann on the
	occasion of his 66th birthday.}

\keywords{\small Loewner matrix, data-driven modeling, structured perturbation, unstructured perturbation, sensitivity analysis, Sylvester equation. \normalsize}


\abstract{%
This paper starts by deriving a factorization of the Loewner matrix pencil that appears in the data-driven modeling approach known as the Loewner framework and explores its consequences.  The first is that the associated quadruple constructed from the data yields a model without requiring further processing. 
	The second consequence is related to how sensitive the eigenvalues of the Loewner pencil are to perturbations.
	Based on an explicit generalized eigenvalue decomposition of this pencil and by making use of perturbation theory of matrix pencils, we explore two types of eigenvalue sensitivities. The first one is defined with respect to unstructured perturbations of the Loewner pencil, while the second one is defined for structured perturbations. 
	 We also discuss how the choice of data affects the two sensitivities.}

\maketitle


	\section{Introduction}
	
	In many practical applications from science and engineering fields, it is common to model certain physical complex phenomena by means of dynamical systems. In some cases, the number of variables that characterize such systems is very high and does not permit feasible online simulation, nor performing fast controlling tasks. Hence, the need for approximating the original large-scale complex model with a much smaller and simpler model that allows the total computational effort to significantly decrease. Model order reduction (MOR) methods accomplish precisely this goal. Over the years, significant effort has been allocated to refining, optimizing and also including data assimilation in MOR methods. We refer the reader to the following books \cite{Antoulas05,AntBeaGug20,BenOhlCohWill17,QuaManNeg15} for more details on various reduction methodologies. 
	
	The Loewner framework is a data-driven modeling and complexity reduction method that can be used to learn models of dynamical systems from measurements of their transfer function. It was originally introduced in  \cite{Antoulas07} and it was steadily developed over the last decade. For linear systems, extensions of the method were proposed to cope with singular or rectangular systems in \cite{Antoulas17}, with parametric systems in \cite{IonAnt14}, and with preservation of the DAEs (differential algebraic equations) in \cite{Gosea20}.  Moreover, in recent years, several works have been made to extend the Loewner framework to certain classes of nonlinear dynamical systems, such as bilinear systems in \cite{AntGosIon16}, switched systems \cite{GPA18}, and quadratic-bilinear systems in \cite{GosAnt18,AntGosHei19}. For a comprehensive view on the Loewner framework we refer the reader to \cite{Antoulas17,KarGosAnt20}.
	
	One important feature of the Loewner framework consists in the fact that it does not need an exact description of the original dynamical system to start with, which is typically described by ordinary or partial differential equations (ODEs, PDEs). Instead of having full access to the coefficient matrices that scale these equations, one requires only transfer function measurement values. These data can be inferred from time-domain snapshots via spectral transforms  (see \cite{PehGugBea17,KarGosAnt20_2} for details) or directly measured with electronic devices (see \cite{LefAnt10,Antoulas17} for details). Finally, by arranging the given data in a specific way, one can construct with basically no computational effort a realization (dynamical system) that explains the data. The Loewner pencil plays a central role in the system realization constructed by the Loewner framework. More precisely, the two Loewner matrices that enter the pencil represent the coefficient matrices that sc
	ale the internal variable vector and its derivative. Consequently, the eigenvalues of the pencil are the poles of the surrogate Loewner model and are used to characterize the dynamics of the system.

	We first derive an explicit eigenvalue decomposition (EVD) of the Loewner pencil based on a general factorization of the Loewner/shifted Loewner matrices. It was previously shown that the Hankel matrix can be factorized in terms of matrices with special structure, e.g., Vandermonde matrices, in \cite{Sarkar90,Golub99,Golub07}. Similarly, the Loewner matrix can also be factorized in terms of generalized Cauchy matrices, as shown in \cite{Zdenek97}. We will show that the factors given by the generalized Cauchy matrices are actually Krylov projection matrices for a particular system realization. Using the factorization of the generalized Loewner matrix, the EVD of the Loewner pencil is hence available. Based on this EVD and on eigenvalue perturbation theory for matrix pencils, some theoretical aspects of the Loewner pencil perturbation are derived.
	
	To study the sensitivity of eigenvalues in different situations, two kinds of Loewner pencil perturbations are considered. The first one is unstructured perturbation and the perturbation quantities are given by random matrices. 
	The sensitivity $\rho$ with respect to unstructured perturbation is introduced. It is shown that $\rho$ is connected to the condition numbers of the associated generalized Cauchy matrices; $\rho$ is hence an useful tool for sensitivity analysis of the Loewner pencil, e.g., in the computation of pseudospectra \cite{Trefethen05,Embree19}. The second case analyzed in this work considers structured perturbation that usually arises due to noisy data. Because of the special structure of the Loewner matrix, the perturbation matrices of the Loewner pencil are also structured. Some previous works, such as the ones in \cite{Lefteriu10,Zlatko19}, have already studied the effects of noisy measurements in the Loewner model. In this work, we provide a new analysis that takes into consideration the pole sensitivity of the Loewner model with respect to perturbation of data. The sensitivity defined with respect to structured perturbation is denoted with $\eta$. Both sensitivities are influenced by the choice of data.

	The paper is organized as follows; Section \ref{sec:intro_Loew} shows a general factorization of the Loewner pencil and derives an explicit generalized eigenvalue decomposition of the pencil. Next, in Section \ref{sec:Sylvester} we show a number of factorizations for the Loewner pencil, depending on different measurements or on different system realizations. Section \ref{sec:sens} defines sensitivities $\rho$ and $\eta$ with respect to the unstructured and structured perturbations. Furthermore, the consequences resulting from these sensitivities are discussed. Section \ref{sec:numerics} includes numerical examples and discussions to illustrate the consequences of the sensitivities for the different test cases.  Conclusions are given in Section \ref{sec:conc}.
	
\newpage
	
	\section{The Loewner matrix pencil and its properties}
	\label{sec:intro_Loew}
	
	We consider the linear time-invariant dynamical system $\Si$ described by the following equations:
	\begin{equation}\label{equ:sys_def}
	\Si: ~~
	\bE \dot{\bx}(t) = \bA \bx(t) + \bB \bu(t),
	\quad \by(t) = \bC \bx(t), 
	\end{equation}
	where
	$ 
	\bC\in\IR^{p\times n}$, $\bE$,
	$\bA\in\IR^{n\times n}$, $\bB\in\IR^{n\times m}$,
	is a minimal realization of $\Si$. Additionally, $\bx(t) \in \mathbb{R}^n$ is the vector of internal variables, $\bu(t) \in \mathbb{R}^m$ is the input and $\by(t) \in \mathbb{R}^{p \times 1}$ is the output.
	Let the associated resolvent of pencil $(\bA,\bE)$ be $\bPhi(s)=(s\bE-\bA)^{-1}$, and the
	transfer function of $\Si$ be $\bH(s)= \bC(s\bE-\bA)^{-1}\bB = \bC\bPhi(s)\bB$.
	Given the interpolation conditions
	\begin{equation}\label{equ:interp_cond}
	\bell^T_i\bH(\mu_i)=\bv_i^T,~~i=1,\cdots,q,\quad\mbox{and}\quad
	\bw_j=\bH(\lambda_j)\br_j,~~j=1,\ldots,k,
	\end{equation}
	we refer to the {\textit left data} $(\bM,\bL^T,\IV)$ and the {\textit right data}
	$(\bLambda,\bR,\IW)$,  where 
	\begin{align}
	\bM &= \text{diag}\left(\mu_1,\cdots,\mu_q \right)
	\in\mathbb{C}^{q \times q},~
	\bL = \left[ \begin{matrix}
	\bell_1 & \cdots & \bell_q
	\end{matrix} \right]^T \in \mathbb{C}^{p \times q},~
	\IV = \left[ \begin{matrix}\bv_1  & \cdots & \bv_q
	\end{matrix} \right]^T\in \mathbb{C}^{m\times q}, \\[1mm]
	\bLambda &= \text{diag}\left(\lambda_1,\cdots,\lambda_k \right)
	\in \mathbb{C}^{k\times k},~
	\bR = \left[ \begin{matrix}\br_1  & \cdots & \br_k
	\end{matrix} \right] \in \mathbb{C}^{m \times k},~
	\IW = \left[ \begin{matrix}
	\bw_1  & \cdots & \bw_k
	\end{matrix} \right]\in \mathbb{C}^{p\times k}.
	\end{align}
	It is assumed here that the interpolation points 
	$\mu_i$, $\lambda_j$ are mutually distinct.
	The Loewner framework in \cite{Antoulas07} offers a simple solution to the 
	problem of constructing a data-based surrogate linear model with the same structure as in (\ref{equ:sys_def}), that satisfies the conditions in (\ref{equ:interp_cond}). The answer is given by the Loewner quadruple:
	\begin{equation}\label{equ:Loew_quad_def}
	(\IW,\IL,\sIL,\IV)\in\IC^{p\times k}\times
	\IC^{q\times k}\times\IC^{q\times k}\times\IC^{q\times m},
	\end{equation}
	where the Loewner matrix $\IL \in \mathbb{C}^{q\times k}$ and the shifted Loewner matrix  $\sIL \in \mathbb{C}^{	q\times k}$ are defined as:
	\begin{align}\label{equ:Loew_mat_def}
	\left(\IL\right)_{i,j}=\frac{
		\bv_i^T\br_j-\bell_i^T\bw_j}{\mu_i-\lambda_j},~~
	\left(\sIL\right)_{i,j}=\frac{
		\mu_i\bv_i^T\br_j-\bell_i^T\bw_j\lambda_j}{\mu_i-\lambda_j}.
	\end{align}
	The Loewner quadruple is often referred to as the {\textit raw model} of the
	data.
	

	Let $\cK_{{L}}$ and $\cK_{{R}}$ be the associated left/right tangential rational Krylov projection matrices:
	\begin{equation}\label{equ:KL_KR_def}
	\cK_{{L}}=\left[\begin{array}{c}
	\bell_1^T\bC\bPhi(\mu_1)\\[1mm]
	~~~\vdots\\[1mm]
	\bell_q^T\bC\bPhi(\mu_q)\\[1mm]
	\end{array}
	\right]\in\IC^{q\times n},~~\cK_{{R}}=\left[\begin{array}{ccc}
	\bPhi(\lambda_1)\bB\br_1 & \cdots & \bPhi(\lambda_k)\bB\br_k
	\end{array}\right]\in\IC^{n\times k},
	\end{equation}
	assumed to satisfy the condition that $\cK_{{L}} \cK_{{R}}$ has full rank.
	In what follows, we will make use of the {\textit Moore-Penrose generalized-inverse}\footnote{Given a matrix $\bM\in\IC^{k\times \ell}$, its Moore-Penrose 
		generalized inverse denoted by $\bM^+\in\IC^{\ell\times k}$, is the unique
		matrix satisfying the conditions (a) $\bM\bM^+\bM=\bM$, (b) 
		$\bM^+\bM\bM^+=\bM^+$, (c) $(\bM\bM^+)^*=\bM\bM^+$, (d) $(\bM^+\bM)^*=\bM^+\bM$.
		For further, details see \cite{StewartSun}}. It is to be noted that the Drazin inverse could also be used, as shown in \cite{Antoulas16}.

	\begin{lemma} \label{lemma:1}
		The following factorizations hold:
		\begin{equation}\label{factorization}
		\IW= \bC{\cK_{{R}}} \in\IC^{p\times k},~
		\IL= - \cK_{L} \bE \cK_{R} \in\IC^{q\times k},~
		\sIL=- \cK_{L} \bA \cK_{R} \in\IC^{q\times k},~
		\IV= \cK_{L} \bB \in\IC^{q\times m} .
		\end{equation}
		Consequently:\\[-2mm]
		\begin{enumerate}
			\item
			This factorization is rank revealing and
			the rank of $\IL$ is equal to the McMillan degree $n$ of $\Si$.
			The entries of the Loewner quadruple $(\IW,\IL,\sIL,\IV)$, 
			depend exclusively on values of the transfer 
			function $\bH$.	Furthermore the following holds:
			\begin{equation}\label{geninverse}
			\bH(s) = \IW\, (\sIL-s\,\IL)^{+}\, \IV
			\end{equation}
			where $\,(\cdot)^+$ denotes the Moore-Penrose generalized inverse of
			$\,(\cdot)$.	
			\item
			An explicit generalized EVD (eigenvalue decomposition) of the Loewner pencil $(\sIL,\IL)$ results. Let $(\lambda ,\hat\bq,\hat\bp)$ be 
			a triple composed of an eigenvalue, and the right/left eigenvectors 
			of $(\bA,\bE)$. Then $\lambda$ is also an eigenvalue of the pencil
			$(\sIL,\,\IL)$, with corresponding right/left eigenvectors:
			\begin{equation}\label{equ:lr_eigv}
			\bq=\cK_{{R}}^+\hat\bq\quad\mbox{and}
			\quad\bp^T\!=\hat\bp^T\cK_{{L}}^+.
			\end{equation}
		\end{enumerate}
	\end{lemma}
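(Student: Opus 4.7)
The plan rests on two resolvent identities for $\bPhi(s)=(s\bE-\bA)^{-1}$. Subtracting $(\lambda\bE-\bA)$ from $(\mu\bE-\bA)$ and sandwiching with $\bPhi(\mu)$ and $\bPhi(\lambda)$ yields
\[
\bPhi(\mu)\,\bE\,\bPhi(\lambda) \;=\; \frac{\bPhi(\lambda)-\bPhi(\mu)}{\mu-\lambda}.
\]
Writing $\bA=\mu\bE-(\mu\bE-\bA)$ and reusing this identity produces the companion formula
\[
\bPhi(\mu)\,\bA\,\bPhi(\lambda) \;=\; \frac{\mu\,\bPhi(\mu)-\lambda\,\bPhi(\lambda)}{\mu-\lambda}.
\]
These two identities are the workhorses behind every part of the lemma.

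Next I would verify the four factorizations in \eqref{factorization} entry by entry. The identities $\IW=\bC\cK_R$ and $\IV=\cK_L\bB$ are immediate: the $j$-th column of $\bC\cK_R$ equals $\bC\bPhi(\lambda_j)\bB\br_j=\bH(\lambda_j)\br_j=\bw_j$ by the right interpolation condition, and symmetrically for $\IV$. For $\IL$, the $(i,j)$ entry of $-\cK_L\bE\cK_R$ is $-\bell_i^T\bC\bPhi(\mu_i)\bE\bPhi(\lambda_j)\bB\br_j$; plugging in the first resolvent identity and then the interpolation values $\bell_i^T\bH(\mu_i)\br_j=\bv_i^T\br_j$ and $\bell_i^T\bH(\lambda_j)\br_j=\bell_i^T\bw_j$ reproduces the formula for $(\IL)_{i,j}$ in \eqref{equ:Loew_mat_def}. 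The same computation with the second resolvent identity handles $\sIL$.

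With the factorizations in hand, the rank claim is immediate: $\IL=-\cK_L\bE\cK_R$ has rank at most $n$ since $\bE\in\IR^{n\times n}$, while the standing assumption that $\cK_L\cK_R$ has full rank (together with $\bE$ nonsingular) forces equality, recovering the McMillan degree. For \eqref{geninverse}, I would first observe the clean identity
\[
\sIL - s\IL \;=\; \cK_L(s\bE-\bA)\cK_R \;=\; \cK_L\,\bPhi(s)^{-1}\,\cK_R,
\]
and then exploit the fact that $\cK_L$ has full column rank $n$ and $\cK_R$ has full row rank $n$ to apply the pseudo-inverse product rule
\[
(\cK_L\,\bPhi(s)^{-1}\,\cK_R)^{+} \;=\; \cK_R^{+}\,\bPhi(s)\,\cK_L^{+}.
\]
Substituting and collapsing the idempotents $\cK_R\cK_R^{+}=\bI_n$ and $\cK_L^{+}\cK_L=\bI_n$ telescopes the expression $\IW(\sIL-s\IL)^{+}\IV$ to $\bC\bPhi(s)\bB=\bH(s)$.

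Finally, for the generalized EVD I would verify the eigenvalue equation directly. If $(\lambda\bE-\bA)\hat\bq=\bfz$ and $\bq=\cK_R^{+}\hat\bq$, then
\[
(\sIL-\lambda\IL)\,\bq \;=\; -\cK_L(\bA-\lambda\bE)\cK_R\cK_R^{+}\hat\bq \;=\; -\cK_L(\bA-\lambda\bE)\hat\bq \;=\; \bfz,
\]
where the collapse $\cK_R\cK_R^{+}=\bI_n$ uses the full row rank of $\cK_R$. The left eigenvector case is symmetric, with $\cK_L^{+}\cK_L=\bI_n$ playing the analogous role. The main obstacle I anticipate is not any of these verifications in isolation but rather pinning down precisely the rank hypotheses needed to invoke the pseudo-inverse product rule $(ABC)^{+}=C^{+}B^{-1}A^{+}$: this requires $A$ to have full column rank, $B$ to be invertible, and $C$ to have full row rank, and while these follow from the standing assumption that $\cK_L\cK_R$ has rank $n$ together with a generic $s$ (so that $\bPhi(s)$ exists), they must be stated carefully so the pseudo-inverse manipulations are rigorous rather than formal.
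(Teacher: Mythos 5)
Your proposal follows essentially the same route as the paper's proof: entrywise verification of the factorizations via a resolvent identity (the paper phrases it as $\bE=\bigl(\bPhi^{-1}(\mu_i)-\bPhi^{-1}(\lambda_j)\bigr)/(\mu_i-\lambda_j)$, which is your first identity sandwiched differently), the explicit Moore--Penrose formula for a product of a full-column-rank factor, an invertible middle factor, and a full-row-rank factor to obtain \eqref{geninverse}, and direct verification of the eigenvector relations using $\cK_R\cK_R^+=\bI_n$ and $\cK_L^+\cK_L=\bI_n$. One correction: your second resolvent identity carries the wrong sign. Carrying out the substitution $\bA=\mu\bE-(\mu\bE-\bA)$ that you yourself describe gives
\[
\bPhi(\mu)\,\bA\,\bPhi(\lambda)=\frac{\lambda\,\bPhi(\lambda)-\mu\,\bPhi(\mu)}{\mu-\lambda},
\]
not $\bigl(\mu\bPhi(\mu)-\lambda\bPhi(\lambda)\bigr)/(\mu-\lambda)$; the corrected version is the one that reproduces $(\sIL)_{i,j}=(\mu_i\bv_i^T\br_j-\lambda_j\,\bell_i^T\bw_j)/(\mu_i-\lambda_j)$ with the stated sign $\sIL=-\cK_L\bA\cK_R$, whereas the identity as you wrote it would yield $\sIL=+\cK_L\bA\cK_R$.
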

	
	\vspace*{2mm}
	\noindent
	It should be stressed that the above results hold
	irrespective of whether $\sIL-s\,\IL$ is singular or rectangular.
	Therefore the Loewner quadruple is a model of the data and
	there is no need for an explicit projection.
	
	\vspace*{1mm}
	\begin{proof}
		Based on the definition of matrices $\cK_{{L}}$ and $\cK_{{R}}$ provided in (\ref{equ:KL_KR_def}), it follows that the $(i,j)$ entry of matrix ${\cK_{{L}}}\bE{\cK_{{R}}}$ can be written as (for all $1 \leq i \leq q, \ 1 \leq j \leq k$):
		$$\small
		\begin{array}{ll}
		\left({\cK_{{L}}}\bE{\cK_{{R}}}\right)_{i,j} &= \bell_i^T\bC\bPhi(\mu_i) \bE \bPhi(\lambda_j)\bB\br_j = \bell_i^T\bC\bPhi(\mu_i) \frac{\bPhi^{-1}(\lambda_j) -\bPhi^{-1}(\mu_i)}{\lambda_j-\mu_i} \bPhi(\lambda_j)\bB\br_j \\[2mm]
		&= \frac{1}{\lambda_j-\mu_i} \Big{(} \underbrace{\bell_i^T\bC\bPhi(\mu_i) \bB}_{\bv_i^T}\br_j - \bell_i^T \underbrace{\bC \bPhi(\lambda_j)\bB\br_j}_{\bw_j} \Big{)} = \frac{\bv_i^T \br_j - \bell_i^T \bw_j }{\lambda_j-\mu_i}.
		\vspace{-3mm}
		\end{array}
		$$
		Note that in the derivations above, the following identity was used $\bE = \frac{\bPhi^{-1}(\mu_i)-\bPhi^{-1}(\lambda_j)}{\mu_i-\lambda_j}$. From (\ref{equ:Loew_mat_def}) it indeed follows that $\IL = - {\cK_{{L}}}\bE{\cK_{{R}}}$. Similarly, we prove the other identities. These computations 
		can also be found in \cite{Antoulas07}.\\[-2mm]
		
		To show (\ref{geninverse}), we notice that because $\cK_L$ is full column rank
		and $\cK_R$ is full row rank, the generalized inverse of $(\sIL-s\,\IL)$ is:
		$$
		\left[\cK_L(\bA-s\bE)\cK_R\right]^+=\cK_R^T\left(\cK_R\cK_R^T\right)^{-1}
		(\bA-s\bE)^{-1}\left(\cK_L^T\cK_L\right)^{-1}\cK_L^T.
		$$
		The desired result follows by multiplying this expression
		on the left by $\bC\cK_R$ and on the right by $\cK_R\bB$, i.e.
		\begin{align*}
		\IW\, (\sIL-s\,\IL)^{+}\, \IV &= \bC\cK_R \left[\cK_L(\bA-s\bE)\cK_R\right]^+ \cK_R\bB \\
		&= \bC\cK_R\cK_R^T\left(\cK_R\cK_R^T\right)^{-1}
		(\bA-s\bE)^{-1}\left(\cK_L^T\cK_L\right)^{-1}\cK_L^T \cK_R\bB =  \bC  (\bA-s\bE)^{-1} \bB = \bH(s).
		\end{align*}
		
		To show (\ref{equ:lr_eigv}) notice that if 
		$(\lambda ,\hat\bq,\hat\bp)$ is a triple composed of the eigenvalue, and the right/left eigenvectors of $(\bA,\bE)$, $\bq=\cK_{{R}}^+\hat\bq$, 
		is the right eigenvector 
		of the Loewner pencil $(\sIL,\IL)$, corresponding to the eigenvalue $\lambda$:
		\begin{align*}
		{\sIL\bq}= -\cK_{{L}}\bA\cK_{{R}}\bq=-\cK_{{L}}\bA\cK_{{R}}\cK_{{R}}^+\hat\bq=-\cK_{{L}}\bA\hat\bq  =-\lambda \cK_{{L}}\bE\hat\bq =\lambda  \left(-\cK_{{L}}\bE\cK_{{R}}\right)\cK_{{R}}^+\hat\bq{=\lambda\, \IL\bq}.
		\end{align*}
		The proof for the left eigenvector $\bp$ follows in a similar way. 
	\end{proof}
	
	\begin{remark}{
			$\bullet$ If in (\ref{equ:KL_KR_def}), $q$ and $k$ are less that $n$, it is readily shown
			that the transfer function of the projected system (\ref{factorization}),
			interpolates the transfer function of the original system at 
			$\mu_i$, $i=1,\cdots,q$, and $\lambda_j$, $j=1,\cdots,k$ (see e.g. chapter 11
			in \cite{Antoulas05}). In the general case $q,\,k\geq n$, expression 
			(\ref{geninverse}) shows that if more than necessary interpolation conditions hold, then the original system is recovered. From a practical point of view, it follows
			that the emergence of the Loewner matrix in (\ref{factorization}) ensures
			that its rank (or numerical rank) provides an estimate of the complexity
			of the underlying system. Hence by establishing a connection between 
			interpolatory projections and the Loewner matrix as in (\ref{factorization}), 
			we obtain the {\textit additional advantage} that the rank of $\IL$ (exact or
			numerical) provides the complexity of the ensuing reduced models. In addition,
			the projected matrices $\bA$ and $\bE$ (i.e. the Loewner pencil) provide information about the poles of the reduced system. This fact will be used in the next sections to examine the sensitivity of models
			to the choice of data and to perturbation in the data.
		}
	\end{remark}
	
	\section{The generalized Sylvester equation and its impact on factorizing the Loewner pencil} 
	\label{sec:Sylvester}
	
	Given the minimal realization $(\bC,\bE,\bA,\bB)$ $\in$ 
	$\IR^{p\times n}\times\IR^{n\times n}\times\IR^{n\times n}
	\times\IR^{n\times m}$, we consider the interpolatory projection matrices 
	$\cK_L\in\IC^{k\times n}$ and $\cK_R\in\IC^{n\times q}$, $k,\,q\geq n$, 
	such that each has full rank and their product $\cK_L \cK_R\in\IC^{k\times q}$ is non-singular. This means that $\bTheta = \cK_L(\cK_R\cK_L)^{+}\cK_R$
	is a projector, i.e., $\bTheta^2 = \bTheta$. The projected quantities are given by (\ref{factorization}).
	The projection matrices $\cK_L$ and $\cK_R$ satisfy the following {\textit generalized Sylvester equations}:
	\small
	\begin{equation}\label{fundamental}
	\fbox{$\displaystyle
		\begin{array}{c|c}
		\underbrace{{\color{black}\bM}{\color{black}\cK_L}{\color{black}\bE}-
			{\color{black}\bPsi\,}{\color{black}\cK_L}{\color{black}\bA}={\color{black}\bL^T}{\color{black}\bC}}_{
			\begin{array}{ll}
			&\\[-2mm]
			\color{black}(\bC,\bE,\bA):&\color{black}\mbox{ observable triple (w.r.t system)}\\[1mm]
			\color{black}(\bL^T\!,\bPsi,\bM):&\color{black}\mbox{ controllable triple (w.r.t data)}\end{array}}&
		\underbrace{{\color{black}\bE}\,{\color{black}\cK_R}{\color{black}\bLambda}-{\color{black}\bA}\,{\color{black}\cK_R}{\color{black}\,\bDelta}=
			{\color{black}\bB}\,{\color{black}\bR}}_{
			\begin{array}{l}
			\\[-2mm]
			\color{black}(\bE,\bA,\bB):~\,\mbox{ controllable triple (w.r.t system)}\\[1mm]
			\color{black}(\bR,\bLambda,\bDelta):~\mbox{ observable triple (w.r.t data)}\end{array}}
		\end{array}
		$}
	\end{equation}
	\normalsize

	\subsection{Deriving factorizations for different data choices}

	Next we list the projection matrices for some special cases of the data measurements. For simplicity, we treat only the SISO case but all the derivations can be extended to MIMO by incorporating the left and right tangential directions as described in the previous section.

	\subsubsection{Matching at finite values}
	
	In this case, let $\bLambda,\bM$ be diagonal matrices,
	with mutually distinct interpolation points and $\bDelta=\bI_q, \bPsi=\bI_k$, where $\bI_n \in \IR^{n \times n}$ is the identity matrix of dimension $n$. Additionally, let $\bL=\II_q^T$, $\bR=\II_k^T$, where $\II_n = \left[ \begin{matrix}
	1 & 1 & \cdots & 1
	\end{matrix} \right]^T \in \IR^n $ is a $n$th dimensional column vector of ones.
	
	Recall that the system resolvent is denoted by $\bPhi(s)=(s\bE-\bA)^{-1}$. Then, this corresponds to the case presented in the previous section (the classical scenario encountered in the Loewner framework). It follows that the following formulas hold true
	$$
	\cK_L=\left[\begin{array}{c}
	\bC\bPhi(\mu_1)\\\vdots\\\bC\bPhi(\mu_q)
	\end{array}\right],~~
	\cK_R=\left[\begin{array}{ccc}
	\bPhi(\lambda_1)\bB&\cdots&\bPhi(\lambda_k)\bB
	\end{array}\right].
	$$
	
	\noindent
	\textbf{Matching at equal left and right points equal to minus the poles} \\
	
	Next, analyze the case for which the left and right interpolation points are equal to each other, and equal to the mirrored poles of the system (w.r.t to the imaginary axis). Denote with $\pi_1,\ldots,\pi_n$ the poles of the underlying model, and let $\bPi \in \IC^{n \times n}$ be the diagonal matrix containing the poles on its diagonal.
	
	Let $k = q =n$ and $\blambda_i=\bmu_i=-\bpi_i$, for all $1 \leq i \leq n$. This represents a special case encountered in optimal $\cH_2$ approximation, as illustrated in \cite{AntBeaGug20}. Furthermore, choose $\bLambda=\bM=-\bPi$, $\bDelta=\bI_q, \bPsi=\bI_k$, and $\bR=\bL = \II_n^T$. Then write
	$$
	\cK_L=\left[\begin{array}{c}\bC\bPhi(-\pi_1)\\\vdots\\\bC\bPhi(-\pi_n)\end{array}\right],~\cK_R=\left[\bPhi(-\pi_1)\bB~\cdots~\bPhi(-\pi_n)\bB\right],
	$$
	while the Loewner matrices $\IL= - \cK_{L} \bE \cK_{R}$ and $\sIL= - \cK_{L} \bA \cK_{R}$ satisfy the following (degenerate) Sylvester equations:
	$$
	\left\{
	\begin{array}{ccc}
	\IL\bPi-\bPi\IL&=&-\bR^T\bW+\bW^T\bR,\\[2mm]
	\sIL\bPi-\bPi\sIL&=&\bR^T\bW\bPi-\bPi\bW^T\bR.
	\end{array}\right.
	$$
	It is to be noted that in this case, the $(i,i)$ entries of the Loewner matrices are written in terms of the transfer function derivatives, as given below
	\begin{equation*}
	\left(\IL\right)_{i,i}=
	-\frac{d \left[\bH(s) \right]}{d s} \Big{\vert}_{s = -\pi_i}, \ \ \left(\sIL\right)_{i,i}=
	-\frac{d \left[ s \bH(s) \right]}{d s} \Big{\vert}_{s = -\pi_i}.
	\end{equation*}
	
	\noindent
	\textbf{Matching at the same finite point} \\
	
	Let $\lambda \in \IC$ be a complex scalar, and assume in this case that all interpolation points are equal to $\lambda$, i.e., $\mu_i = \lambda$ and $\lambda_j = \lambda$. We presents the results for this through an illustrative simple example. More concrete, choose $k=q=3$, and hence we have that the left quantities are written as follows
	$$
	\bM=\left[\begin{array}{rrr}\lambda&0&0\\-1&\lambda&0\\0&-1&\lambda
	\end{array}\right],~\bL^T=\left[\begin{array}{c}1\\0\\0\end{array}\right]~
	\Rightarrow~\cK_L=\left[\begin{array}{c}
	\bC\bPhi(\lambda)\\\bC\bPhi(\lambda)\bE\bPhi(\lambda)\\
	\bC\bPhi(\lambda)\bE\bPhi(\lambda)\bE\bPhi(\lambda)\end{array}\right],
	$$
	while the right quantities are given below:
	$$
	\bLambda=\left[\begin{array}{rrr}\lambda&-1&0\\0&\lambda&-1\\0&0&\lambda
	\end{array}\right],~\bR^T=\left[\begin{array}{c}1\\0\\0\end{array}\right]~
	\Rightarrow~\cK_R=\left[\begin{array}{ccc}
	\bPhi(\lambda)\bB&\bPhi(\lambda)\bE\bPhi(\lambda)\bB&
	\bPhi(\lambda)\bE\bPhi(\lambda)\bE\bPhi(\lambda)\bB\end{array}\right].
	$$
	Thus, the Loewner matrices $\IL=\cK_L\bE\cK_R$ and $\sIL=\cK_L\bA\cK_R$, become:
	$$\small
	\IL=\left[\begin{array}{ccc}
	\frac{1}{1!}\bH^{(1)}(\lambda)&\frac{1}{2!}\bH^{(2)}(\lambda)&\frac{1}{3!}\bH^{(3)}(\lambda)\\[1mm]
	\frac{1}{2!}\bH^{(2)}(\lambda)&\frac{1}{3!}\bH^{(3)}(\lambda)&\frac{1}{4!}\bH^{(4)}(\lambda)\\[1mm]
	\frac{1}{3!}\bH^{(3)}(\lambda)&\frac{1}{4!}\bH^{(4)}(\lambda)&\frac{1}{5!}\bH^{(5)}(\lambda)\\
	\end{array}\right],~\sIL=\lambda\IL\,+
	\left[\begin{array}{ccc}
	\bH^{(0)}(\lambda)&\frac{1}{1!}\bH^{(1)}(\lambda)&\frac{1}{2!}\bH^{(2)}(\lambda)\\[1mm]
	\frac{1}{1!}\bH^{(1)}(\lambda)&\frac{1}{2!}\bH^{(2)}(\lambda)&\frac{1}{3!}\bH^{(3)}(\lambda)\\[1mm]
	\frac{1}{2!}\bH^{(2)}(\lambda)&\frac{1}{3!}\bH^{(3)}(\lambda)&\frac{1}{4!}\bH^{(4)}(\lambda)\\
	\end{array}\right].
	$$
	It follows that the equations satisfied by the Loewner and the shifted Loewner matrices can be rewritten as:
	$$\small
	\begin{array}{l}
	\IL\bJ_r-\bJ_r^T\IL=\left[\begin{array}{c}1\\0\\0\end{array}\right]
	\left[\bH^{(0)}(\lambda) ~\frac{1}{1!}\bH^{(1)}(\lambda) ~
	\frac{1}{2!}\bH^{(2)}(\lambda)\right]-\left[
	\begin{array}{c}\bH^{(0)}(\lambda)\\[1mm]\frac{1}{1!}\bH^{(1)}(\lambda)\\[1mm]
	\frac{1}{2!}\bH^{(2)}(\lambda)\end{array}\right][1 ~0 ~0],
	\\[3mm]
	(\sIL-\lambda\IL)\bJ_r-\bJ_r^T(\sIL-\lambda\IL)=
	\left[
	\begin{array}{c}0\\[1mm]\bH^{(0)}(\lambda)\\[1mm]\frac{1}{1!}\bH^{(1)}(\lambda)
	\\[1mm]
	\end{array}\right][1 ~0 ~0]-
	\left[\begin{array}{c}1\\0\\0\end{array}\right]
	\left[0 ~\bH^{(0)}(\lambda) ~\frac{1}{1!}\bH^{(1)}(\lambda)\right].
	\\[1mm]
	\end{array}
	$$
	\normalsize

	
	\subsubsection{Matching at infinity}
	
	Let $\bM=\bI_q$, $\bL=\bfe_{1,q}^T$, $\bPsi\,=\bJ_q^T$, and also
	$\bLambda=\bI_k$, $\bR=\bfe_{1,k}^T$, $\,\bDelta=\bJ_k$. Here $\bJ_n \in \IR^{n \times n}$ is a Jordan matrix of dimension $n$ with 0 eigenvalues and $\bfe_{1,n} \in \IR^n$ is the first unit vector of length $n$ (we sometime use $\bfe_{1}$ for ease of notation). In this case, we also assume that $\bE=\bI_n$. The following derivations hold
	\small
	$$
	\left.\begin{array}{l}
	\cK_L-\bJ_q^T\cK_L\bA=\bfe_1\bC\\[1mm]
	\cK_R-\bA\cK_R\bJ_k=\bB\bfe_1^T\end{array}\right\}\Rightarrow
	\cK_L=\left[\!\begin{array}{c}\bC\\\bC\bA\\\vdots\\\bC\bA^{q-1}
	\end{array}\!\!\right],~\cK_R=[\bB,~\bA\bB,~\cdots,\,\bA^{k-1}\bB]~
	\Rightarrow~\left\{
	\begin{array}{l}
	\IL=\cK_L\cK_R=\cH_{q,k},\\[1mm]\sIL=\cK_L\bA\cK_R=\sigma\cH_{q,k}.
	\end{array}\right.
	$$
	\normalsize
	Here,  $\cH_{q,k} \in \IR^{q \times k}$ and  $\sigma\cH_{q,k} \in \IR^{q \times k}$ are Hankel matrices that contain as entries the Markov parameters $\bh_i = \bC \bA^{i-1} \bB, i \geq 1$ of the original system, i.e. $ \left(\cH_{q,k} \right)_{i,j} = \bh_{i+j-1}$ and $ \left(\sigma \cH_{q,k} \right)_{i,j} = \bh_{i+j}$ for $i.j \geq 1$.
	$$
	\left.\begin{array}{l}
	\cK_L{\cK_R}-\bJ_q^T\cK_L\bA{\cK_R}=
	\bfe_1\bC{\cK_R}\\[2mm]
	{\cK_L}\cK_R-{\cK_L}\bA\cK_R\bJ_k=
	{\cK_R}\bB\bfe_1^T\end{array}\right\}\Rightarrow\left\{
	\begin{array}{c}
	\cH_{q,k}-\bJ_q^T\sigma\cH_{q,k}=\left[\begin{array}{c}
	\bh_1,~\bh_2,~\cdots,~\bh_k\\
	\bfz\\\vdots\end{array}\right]=\bfe_1\,\bfe_1^T \cH_{q,k},\\[5mm]
	\cH_{q,k}-\sigma\cH_{q,k}\bJ_k=\left[
	\begin{array}{ccc}
	\bh_1&0&\cdots\\\bh_2&0&\cdots\\\vdots&\vdots&\ddots\\\bh_q&0&\cdots\end{array}\right]=\cH_{q,k} \bfe_1\,\bfe_1^T.
	\end{array}\right.
	$$
	The equations for the Hankel and shifted Hankel matrices are given below:
	$$
	\begin{array}{l}
	\cH_{q,k}\bJ_k-\bJ_q^T\cH_{q,k}=
	\bfe_1 \bfe_1^T\cH_{q,k}\bJ_k-\bJ_q^T\cH_{q,k} \bfe_1 \bfe_1^T,~~\\[2mm]
	\sigma\cH_{q,k}\bJ_k-\bJ_q^T\sigma\cH_{q,k}=\bfe_1 \bfe_1^T \cH_{q,k}-
	\cH_{q,k} \bfe_1 \bfe_1^T .
	\end{array}
	$$

	\subsection{Deriving factorizations for different system realizations}
	
	Next we will derive several factorizations of the Loewner quadruple based
	on different system realizations and data. For clarity
	we will discuss three separate special cases: (a) SISO 
	(single-input single-output) systems with strictly proper rational (spr) 
	transfer functions, (b) MIMO (multiple-input multiple output) systems with
	spr transfer functions and finally (c) systems with polynomial transfer 
	functions. The general case of systems characterized by DAEs (differential algebraic equations),
	with arbitrary rational transfer functions follows readily as a combination
	of these special cases.
	
	\subsubsection{The case of SISO systems with strictly proper transfer function}
	
	Let an underlying linear SISO ($m=p=1$) system $\Si$ of dimension $n$ as defined in (\ref{equ:sys_def}) be 
	represented by means of its partial fraction decomposition:
	\begin{equation}\label{equ:zero-pole-siso}
	\bH(s)=\frac{\bn(s)}{\bd(s)}=\sum_{i=1}^n\frac{\gamma_i}{s-\pi_i},
	\end{equation}
	where $\gamma_i \neq 0 \in\IC$, for all $i$. The poles and the residues of the system are denoted by $\{\pi_1,\cdots,\pi_n\}$, and by $\{\gamma_1,\cdots,\gamma_n\}$, respectively. Let diagonal matrices be defined as $\bPi=\mbox{diag}[\pi_1,\cdots,\pi_n]\in\IC^{n\times n}$ and
	$\bGamma=\mbox{diag}[\gamma_1,\cdots,\gamma_n]\in\IC^{n\times n}$, while $ \bgamma = [\gamma_1,\cdots,\gamma_n]^T\in\IC^{n\times 1}$.
	Then, a realization for its transfer function $\bH(s)$, defined in (\ref{equ:zero-pole-siso}), is given by:
	\begin{equation}\label{equ:realiz_pol_res}
	\bA = \bPi\bGamma, ~ \bE = \bGamma,~ \bB =  \bGamma{\II }_n, ~ \bC = {\II }_n^T \bGamma.
	\end{equation}
	where $\II_n$ is all-ones column vector of length $n$. It is to be noted that an equivalent realization to that in (\ref{equ:realiz_pol_res}) is given by: $\bA = \bPi, ~ \bE = \bI_n,~ \bB =  {\II }_n, ~ \bC = {\II }_n^T \bGamma$.

	To the SISO system $\Si$, we associate a Loewner quadruple (raw model)
	\begin{equation}\label{equ:Loew_quad_def2}
	(\IW,\IL,\sIL,\IV)\in\IC^{1\times k}\times
	\IC^{q\times k}\times\IC^{q\times k}\times\IC^{q\times 1},
	\end{equation}
	constructed by means of the left interpolation points $\mu_i$, $i=1,\cdots,q$,
	the right interpolation points $\lambda_j$, $j=1,\cdots,k$ ($\lambda_j$, $\mu_i$ assumed pairwise distinct), and the left values $\bv_i=\bH(\mu_i)$ and the right values $\bw_j=\bH(\lambda_j)$, where $q,k\geq n$. Additionally, by choosing $\bR={\II }_q^T$, $\bL={\II }^T_k$, it follows that
	\begin{align}\label{equ:Loew_mat_def2}
	\IW_j = \bw_j,~~\left(\IL\right)_{i,j}=\frac{
		\bv_i-\bw_j}{\mu_i-\lambda_j},~~
	\left(\sIL\right)_{i,j}=\frac{
		\mu_i\bv_i-\bw_j\lambda_j}{\mu_i-\lambda_j},~~\IV_i = \bv_i.
	\end{align}
	Given two sets of mutually
	distinct complex numbers $\alpha_i$, $i=1,\ldots,\kappa$, and 
	$\beta_j$, $j=1,\ldots,\rho$,
	we define the associated {\textit Cauchy matrix} as:
	\begin{equation}\label{equ:Cauchy_def}
	\cC_{\alpha,\beta}=\left[\begin{array}{ccc}
	\frac{1}{\alpha_1-\beta_1}& \cdots & \frac{1}{\alpha_1-\beta_\rho}\\[1mm]
	\vdots & \cdots & \vdots \\[1mm]
	\frac{1}{\alpha_\kappa-\beta_1}& \cdots & \frac{1}{\alpha_\kappa-\beta_\rho}\\[1mm]
	\end{array}\right]\in\IC^{\kappa\times \rho}.
	\end{equation}
	
	\begin{lemma} \label{lemma:2}
		Given the Loewner quadruple (\ref{equ:Loew_quad_def2}), 
		the following factorizations hold true:
		\begin{equation}\label{equ:facsiso}
		\IW={\II }_n^T\,\bGamma\,
		\cC^T_{\lambda,\pi} \in\IC^{1\times k},~
		\IL=-{\cC_{\mu,\pi}}\bGamma\,\cC^T_{\lambda,\pi},~
		\sIL=-{\cC_{\mu,\pi}}\bPi\bGamma\cC^T_{\lambda,\pi} \in\IC^{q\times k},~\IV={\cC_{\mu,\pi}}\,\bGamma{\II }_n \in\IC^{q\times 1}.
		\end{equation}
	\end{lemma}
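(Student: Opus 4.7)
The plan is to apply Lemma~\ref{lemma:1} directly to a convenient state-space realization of $\bH$ and then identify the resulting Krylov projection matrices $\cK_L,\cK_R$ explicitly as (scaled) Cauchy matrices. The simpler equivalent realization mentioned after \eqref{equ:realiz_pol_res}, namely $\bE = \bI_n$, $\bA = \bPi$, $\bB = \II_n$, $\bC = \II_n^T\bGamma$, is the natural choice because its resolvent is diagonal: $\bPhi(s) = (s\bI_n - \bPi)^{-1} = \mathrm{diag}\!\left[(s-\pi_1)^{-1},\ldots,(s-\pi_n)^{-1}\right]$.

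First I would compute $\cK_R$ entrywise: since $\bPhi(\lambda_j)\bB$ is the vector with $i$-th entry $1/(\lambda_j-\pi_i)$, stacking these columns gives $\cK_R = \cC^T_{\lambda,\pi}$ straight from the definition \eqref{equ:Cauchy_def}. Similarly, $\bC\bPhi(\mu_i) = \II_n^T\bGamma(\mu_i\bI_n-\bPi)^{-1}$ has $j$-th entry $\gamma_j/(\mu_i-\pi_j)$, so stacking rows yields $\cK_L = \cC_{\mu,\pi}\bGamma$.

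The four factorizations in \eqref{equ:facsiso} then fall out of the identities \eqref{factorization} of Lemma~\ref{lemma:1} by direct substitution. For $\IW = \bC\cK_R$ and $\IV = \cK_L\bB$ this is immediate. For $\IL = -\cK_L\bE\cK_R$ the middle $\bE = \bI_n$ disappears, giving $-\cC_{\mu,\pi}\bGamma\cC^T_{\lambda,\pi}$. For $\sIL = -\cK_L\bA\cK_R$ we obtain $-\cC_{\mu,\pi}\bGamma\bPi\cC^T_{\lambda,\pi}$, and since $\bGamma$ and $\bPi$ are both diagonal they commute, yielding the stated form $-\cC_{\mu,\pi}\bPi\bGamma\cC^T_{\lambda,\pi}$.

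There is essentially no obstacle: the only thing to verify carefully is that the chosen realization actually realizes $\bH(s)$ in \eqref{equ:zero-pole-siso}, which follows from $\bC(s\bI_n - \bPi)^{-1}\bB = \II_n^T\bGamma(s\bI_n-\bPi)^{-1}\II_n = \sum_{i=1}^n \gamma_i/(s-\pi_i)$, and that $\cK_L\cK_R$ has full rank $n$ so Lemma~\ref{lemma:1} applies; this rank condition is equivalent to the nonsingularity of the Cauchy matrices $\cC_{\mu,\pi}$ and $\cC_{\lambda,\pi}$ (with $\gamma_i\neq 0$), which holds because $\{\mu_i\},\{\lambda_j\},\{\pi_i\}$ are mutually distinct. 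Alternatively, one could bypass Lemma~\ref{lemma:1} and verify the $(i,j)$ entries of $-\cC_{\mu,\pi}\bGamma\cC^T_{\lambda,\pi}$ and $-\cC_{\mu,\pi}\bPi\bGamma\cC^T_{\lambda,\pi}$ directly via the partial fraction expansion \eqref{equ:zero-pole-siso} applied to \eqref{equ:Loew_mat_def2}, which again reduces to elementary Cauchy-type identities.
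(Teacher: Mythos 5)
Your proof is correct, but it takes a genuinely different route from the paper. The paper proves Lemma~\ref{lemma:2} by a direct entrywise computation: it substitutes the partial fraction expansion \eqref{equ:zero-pole-siso} into $\IL_{(i,j)}=\frac{\bH(\mu_i)-\bH(\lambda_j)}{\mu_i-\lambda_j}$ and simplifies to $\sum_{l}\gamma_l\frac{1}{\mu_i-\pi_l}\frac{1}{\pi_l-\lambda_j}$, which is exactly the second alternative you mention in your last sentence. You instead specialize the general factorization \eqref{factorization} of Lemma~\ref{lemma:1} to the diagonal realization $\bE=\bI_n$, $\bA=\bPi$, $\bB=\II_n$, $\bC=\II_n^T\bGamma$, and identify $\cK_L=\cC_{\mu,\pi}\bGamma$ and $\cK_R=\cC^T_{\lambda,\pi}$; your entrywise identifications of these Krylov matrices are correct, and the commutation of the diagonal factors $\bGamma$ and $\bPi$ closes the argument. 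Your route has the conceptual advantage of exhibiting the Cauchy factors as tangential Krylov matrices of the pole--residue realization --- which is precisely the structural connection the paper's introduction advertises --- at the cost of depending on Lemma~\ref{lemma:1} (whose own proof is the same kind of entrywise calculation), whereas the paper's proof is self-contained and needs no realization at all. One small remark: the identities in \eqref{factorization} hold entrywise and do not require the full-rank condition on $\cK_L\cK_R$ (that hypothesis is only used for the generalized-inverse formula \eqref{geninverse} and the eigenvector formulas \eqref{equ:lr_eigv}), so your verification that the Cauchy matrices are nonsingular, while correct and harmless, is not actually needed to establish the factorizations claimed here.
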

	
	\begin{proof}
		For $1 \leq i \leq q$ and $1 \leq j \leq k$, we can write the $(i,j)$ entry of the Loewner matrix $\IL$ in (\ref{equ:Loew_mat_def2}), as follows
		$$
		\IL_{(i,j)} = \frac{\bv_i-\bw_j}{\mu_i-\lambda_j} = \frac{\bH(\mu_i)-\bH(\lambda_j)}{\mu_i-\lambda_j} = \frac{\Big( 
			\sum_{l=1}^n \frac{\gamma_l}{\mu_j-\pi_l} - \sum_{\l=1}^n \frac{\gamma_l}{\lambda_j-\pi_l}
			\Big)}{\mu_i-\lambda_j}
		= \sum_{l=1}^n  \gamma_l \frac{1}{\mu_i-\pi_l} \frac{1}{\pi_l- \lambda_j}.
		$$
		Hence, it follows that the $(i,j)$ entry of the Loewner matrix $\IL$ coincides with the $(i,j)$ entry of the matrix computed by the product $-{\cC_{\mu,\pi}}~\bGamma~\cC^T_{\lambda,\pi}$, for all $i,j$. Hence, the second equality in (\ref{equ:facsiso}) holds. Similarly, we can show the other three equalities. 
	\end{proof}
	
	\begin{proposition}\label{prop:Cauchy_Sylv}
		The Cauchy matrix in (\ref{equ:Cauchy_def}) satisfies the Sylvester equation $\bD_\alpha \cC_{\alpha,\beta}-\cC_{\alpha,\beta} \bD_\beta={\II }_\kappa{\II }_\rho^T$, where $\bD_\alpha=\mbox{diag}[\alpha_1,\cdots,\alpha_\kappa],~~
		\bD_\beta=\mbox{diag}[\beta_1,\cdots,\beta_\rho]$. Hence, the Cauchy matrices that are used for the factorizations in (\ref{equ:facsiso}) satisfy the Sylvester equations:
		\begin{equation}\label{equ:Cauchy_Sylv}
		\bM \cC_{\mu,\pi}-  \cC_{\mu,\pi} \bPi={\II }_q{\II }_n^T, \ \ \cC^T_{\lambda,\pi}\bLambda-\bPi\cC^T_{\lambda,\pi}={\II }_n{\II }_k^T.
		\end{equation}
	\end{proposition}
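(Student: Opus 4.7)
The plan is to verify the general Sylvester identity by a direct entrywise computation, and then obtain the two specific identities as immediate specializations (with the second one obtained by transposition).

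First I would compute the $(i,j)$ entry of the left-hand side $\bD_\alpha \cC_{\alpha,\beta}-\cC_{\alpha,\beta}\bD_\beta$. Since $\bD_\alpha$ and $\bD_\beta$ are diagonal, the products simply scale rows and columns of $\cC_{\alpha,\beta}$, giving
\[
(\bD_\alpha \cC_{\alpha,\beta})_{i,j}-(\cC_{\alpha,\beta}\bD_\beta)_{i,j}=\frac{\alpha_i}{\alpha_i-\beta_j}-\frac{\beta_j}{\alpha_i-\beta_j}=\frac{\alpha_i-\beta_j}{\alpha_i-\beta_j}=1.
\]
This holds for every $i,j$ because the $\alpha_i$ and $\beta_j$ are assumed mutually distinct, so the denominators never vanish. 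Since every entry equals $1$, the matrix $\bD_\alpha \cC_{\alpha,\beta}-\cC_{\alpha,\beta}\bD_\beta$ is the $\kappa\times\rho$ all-ones matrix, which is exactly $\II_\kappa\II_\rho^T$. This establishes the general Sylvester equation claimed in the proposition.

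For the two specific equations, I would apply the general identity twice. Taking $\alpha_i=\mu_i$ with $\bD_\alpha=\bM$ and $\beta_j=\pi_j$ with $\bD_\beta=\bPi$ yields $\bM\cC_{\mu,\pi}-\cC_{\mu,\pi}\bPi=\II_q\II_n^T$, which is the first identity. For the second, I would apply the general identity to $\cC_{\lambda,\pi}$, i.e.\ with $\alpha_i=\lambda_i$ and $\beta_j=\pi_j$, obtaining $\bLambda\,\cC_{\lambda,\pi}-\cC_{\lambda,\pi}\bPi=\II_k\II_n^T$, and then transpose both sides. Using that $\bLambda$ and $\bPi$ are diagonal (hence symmetric) and that $(\II_k\II_n^T)^T=\II_n\II_k^T$, the transpose reads $\cC^T_{\lambda,\pi}\bLambda-\bPi\,\cC^T_{\lambda,\pi}=\II_n\II_k^T$, which is the second identity.

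There is no real obstacle here: the only nontrivial step is the algebraic identity $\alpha_i/(\alpha_i-\beta_j)-\beta_j/(\alpha_i-\beta_j)=1$, and the rest is bookkeeping about which diagonal matrix plays the role of $\bD_\alpha$ or $\bD_\beta$ and when to transpose. The distinctness assumption on $\{\alpha_i\}\cup\{\beta_j\}$ is exactly what is needed to ensure the Cauchy matrix is well-defined; it is already implicit in the setup (interpolation points are pairwise distinct, and poles $\pi_l$ differ from the sampling points $\mu_i$ and $\lambda_j$, otherwise $\bH$ would not be defined at those nodes).
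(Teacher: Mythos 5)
Your proof is correct: the entrywise computation $\alpha_i/(\alpha_i-\beta_j)-\beta_j/(\alpha_i-\beta_j)=1$ immediately gives the rank-one right-hand side, and the two specializations (including the transposition step for $\cC^T_{\lambda,\pi}$, using that $\bLambda$ and $\bPi$ are diagonal) are handled properly with consistent dimensions. The paper states this proposition without proof, and your direct verification is exactly the standard argument one would supply.
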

	
	It is to be noted that the equations in (\ref{equ:Cauchy_Sylv}) represent special cases of those in (\ref{fundamental}), for a particular realization of the original system.

	\begin{lemma} \label{lemma:11}
		The right and left eigenvectors of the Loewner matrix pencil given by 
		$(\sIL, \IL)$ corresponding to the eigenvalue $\pi_i$ are given, respectively, by
		\begin{align} \label{equ:lr_eigv2}
		\bq_i &= (\mathcal{C}_{\lambda, \pi}^T)^{+} \mathbf{e}_i, \quad \text{and} \quad
		\bp_i = (\mathcal{C}_{\mu, \pi}^T)^{+} \mathbf{e}_i,
		\end{align}
		where $\mathbf{e}_i$ is the unit vector of length $n$ whose $i^{th}$ entry is $1$ (and all others are zeros).
	\end{lemma}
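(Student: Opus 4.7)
The plan is to combine the general eigenvector formulas from Lemma \ref{lemma:1}(2) with the explicit Cauchy factorization from Lemma \ref{lemma:2}, using the specific SISO realization in \eqref{equ:realiz_pol_res}. The two steps that need to be carried out are (i) identifying what $\cK_L$ and $\cK_R$ are for this particular realization, and (ii) computing the left and right eigenvectors of the underlying pencil $(\bA,\bE)=(\bPi\bGamma,\bGamma)$.

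For step (i), I would match Lemma \ref{lemma:1}'s factorization $\IL=-\cK_L\bE\cK_R$ with Lemma \ref{lemma:2}'s factorization $\IL=-\cC_{\mu,\pi}\bGamma\cC^T_{\lambda,\pi}$ using $\bE=\bGamma$. Comparing with $\IW=\bC\cK_R$ and $\IV=\cK_L\bB$ together with $\bC=\II_n^T\bGamma$ and $\bB=\bGamma\II_n$ pins this down unambiguously as
\begin{equation*}
\cK_L = \cC_{\mu,\pi},\qquad \cK_R = \cC^T_{\lambda,\pi}.
\end{equation*}

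For step (ii), since $\bGamma$ is diagonal and invertible, the generalized eigenvalue equation $\bA\hat\bq=\pi_i\bE\hat\bq$ reduces to $\bPi(\bGamma\hat\bq)=\pi_i(\bGamma\hat\bq)$, so $\bGamma\hat\bq$ is a (right) eigenvector of the diagonal matrix $\bPi$ at $\pi_i$, giving $\hat\bq_i=\gamma_i^{-1}\mathbf{e}_i$. Similarly, $\hat\bp^T\bA=\pi_i\hat\bp^T\bE$ simplifies (after canceling $\bGamma$ from the right) to $\hat\bp^T\bPi=\pi_i\hat\bp^T$, yielding $\hat\bp_i=\mathbf{e}_i$.

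Finally, plugging into Lemma \ref{lemma:1}(2) gives the right eigenvector
\begin{equation*}
\bq_i = \cK_R^+\hat\bq_i = (\cC^T_{\lambda,\pi})^+ \gamma_i^{-1}\mathbf{e}_i,
\end{equation*}
which coincides with the claimed expression up to the nonzero scalar $\gamma_i^{-1}$ (irrelevant since eigenvectors are defined only up to scaling). For the left eigenvector, $\bp_i^T=\hat\bp_i^T\cK_L^+=\mathbf{e}_i^T\cC_{\mu,\pi}^+$, so transposing and using the standard pseudoinverse identity $(\cC_{\mu,\pi}^+)^T=(\cC_{\mu,\pi}^T)^+$ gives $\bp_i=(\cC_{\mu,\pi}^T)^+\mathbf{e}_i$, as required. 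There is no real obstacle here; the only things to be careful about are noting that eigenvectors are defined only up to scalar and invoking the $(\,\cdot\,)^+$-transpose commutation so that the left-eigenvector formula is phrased in the symmetric-looking form stated in \eqref{equ:lr_eigv2}.
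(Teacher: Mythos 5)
Your proof is correct and follows essentially the same route as the paper: identify $\cK_L=\cC_{\mu,\pi}$, $\cK_R=\cC^T_{\lambda,\pi}$ via the factorization of Lemma \ref{lemma:2}, observe that $(\pi_i,\mathbf{e}_i,\mathbf{e}_i)$ is (up to scaling) an eigen-triple of the inner pencil, and substitute into \eqref{equ:lr_eigv}. You are in fact slightly more careful than the paper, which silently absorbs the harmless $\gamma_i^{-1}$ normalization and the $(\cdot)^+$--transpose commutation that you make explicit.
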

	
	\begin{proof}
		The proof is similar to that used to prove part c). of Lemma \ref{lemma:1}. Since the factorization in (\ref{equ:facsiso}) holds, it is easy to show that $\left(\pi_i, \mathbf{e}_i, \mathbf{e}_i \right)$ for $i = 1,\ldots, n$ are the triple (eigenvalue, left eigenvector, right eigenvector) of pencil $\left(\bPi, \bI \right)$. Then, the identity stated in (\ref{equ:lr_eigv2}) can be obtained by simply substituting $\cK_{{R}}$ with $\mathcal{C}_{\lambda, \pi}^T$ and $\cK_{{L}}$ with $\mathcal{C}_{\mu, \pi}$ into the formulas given in (\ref{equ:lr_eigv}). 
	\end{proof}
	If the left interpolation points are the same as the right interpolation points, then
	\begin{align}\label{equ:lr_eigv3}
	\bq_i =\bp_i= (\mathcal{C}_{\mu, \pi}^T)^{+} \mathbf{e}_i,
	\end{align} 
	which means the left eigenvector coincide to the right eigenvector.
	
	{\textbf Connection with the Hankel singular values.}
	Again consider the realization of $\Si$ as given in (\ref{equ:realiz_pol_res}). Moreover, consider that $\Si$ is a stable system. Then, it follows that the controllability Gramian is $\mathcal{P} = \mathcal{C}_{-\pi, \pi^*}$, while the observability Gramian is given by $\mathcal{Q} = -\bGamma^* \mathcal{C}_{\pi^*, -\pi} \bGamma$. We conclude that if $k = q = n$ and the interpolation points are chosen as $\mu_i = -\pi^*_i$ and
	$\lambda_j = -\pi^*_j$, and the resulting Loewner matrix satisfies:\\[-2mm]
	\begin{center} 
		$ 
		-\IL^* \bGamma = \mathcal{P}\mathcal{Q}.
		$\\[2mm]
	\end{center}
	
	\noindent
	Thus, the eigenvalues of the squares of the Hankel singular values $\sigma_i$, $i = 1, \ldots, n$, of the system
	are equal to the eigenvalues of the resulting Loewner matrix $\IL$ scaled by the diagonal matrix $\bGamma$:
	\begin{equation} \label{equ:hsv}
	\sigma^2_i(\Si) = \sigma(\IL^* \bGamma)_i, ~~~~ i = 1, \ldots, n.
	\end{equation}

	\subsubsection{The case of MIMO systems with strictly proper transfer function}
	\label{sec:MIMO_str_prop}
	
	Let $\bC=\left[\bc_1\cdots\bc_n\right]\in\IR^{p\times n},~~
	\bB^T=\left[\bb_1\cdots\bb_n\right]\in\IR^{m\times n}$ with $\bc_i\in\IR^p,
	~~\bb_i\in\IR^m$ and also assume that $\bA = \bPi$.~ Hence, the transfer function $\bH(s)$ can be expressed in pole-residue form as follows
	\begin{equation}\label{equ:zero-pole-mimo}
	\bH(s)=\bC \ \left(s\bI-\bPi\right)^{-1} \bB =\sum_{i=1}^n\displaystyle\frac{\bc_i \bb_i^T}{s-\pi_i}.
	\end{equation}
	Consider now the generalized Cauchy matrices $\cC_{{L}}\in\IC^{q\times n}$ and $\cC_{{R}} \in\IC^{n\times k}$, defined as follows:
	\begin{align} \label{equ:CL_CR_def}
	(\cC_{{L}})_{i,j}=\frac{\bell_i^T\bc_j}{\mu_i-\pi_j}, \ \ \ (\cC_{{R}})_{i,j}=\frac{\bb_i^T\br_j}{\lambda_j-\pi_i},
	\end{align}
	that can be obtained as solutions of Sylvester equations:
	\begin{align}\label{equ:Cauchy_Gen_Sylv}
	\bM \cC_{{L}}-{\cC_{{L}}\bPi=\bL^T\bC}, \ \ \ \cC_{{R}} \bLambda - \bPi \cC_{{R}} =\bB\bR.
	\end{align}
	As before, it is to be noted that the equations in (\ref{equ:Cauchy_Gen_Sylv}) represent special cases of those in (\ref{fundamental}), for a particular realization of the original system as given in (\ref{equ:zero-pole-mimo}).

	\begin{lemma} \label{lemma:3}
		In the above setting, the relationships below hold true:
		\begin{equation}\label{equ:facmimo}
		\IW= \bC\,\cC_{{R}} \in\IC^{p \times k},~~
		\IL= -\cC_{{L}}\cC_{{R}},~
		\sIL=-\cC_{{L}}\bPi\cC_{{R}} \in\IC^{q\times k},~~
		\IV= \cC_{{L}}\bB \in\IC^{q\times m}.
		\end{equation}
	\end{lemma}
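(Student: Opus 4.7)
The plan is to reduce the result directly to Lemma~\ref{lemma:1} by specializing the factorizations there to the particular realization that underlies the partial-fraction form \eqref{equ:zero-pole-mimo}, namely $\bE=\bI_n$ and $\bA=\bPi$, with columns of $\bC$ and rows of $\bB$ given by $\bc_i$ and $\bb_i^T$, respectively. For this realization the resolvent is
\[
\bPhi(s)=(s\bI_n-\bPi)^{-1}=\mbox{diag}\!\left(\tfrac{1}{s-\pi_1},\ldots,\tfrac{1}{s-\pi_n}\right),
\]
which is the key observation that converts each tangential Krylov block into a Cauchy-type entry.

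First I would compute the entries of the tangential rational Krylov matrices $\cK_L$ and $\cK_R$ from \eqref{equ:KL_KR_def}. Using the diagonal form of $\bPhi$ together with $\bC=[\bc_1\ \cdots\ \bc_n]$ and $\bB^T=[\bb_1\ \cdots\ \bb_n]$, the $(i,j)$ entry of $\cK_L$ evaluates to
\[
\bell_i^T\bC\,\bPhi(\mu_i)\,\bfe_j=\frac{\bell_i^T\bc_j}{\mu_i-\pi_j},
\]
which coincides with $(\cC_L)_{i,j}$ in \eqref{equ:CL_CR_def}. The analogous computation for $\cK_R$ gives
\[
\bfe_i^T\bPhi(\lambda_j)\bB\br_j=\frac{\bb_i^T\br_j}{\lambda_j-\pi_i}=(\cC_R)_{i,j}.
\]
Thus, for this realization, the Krylov matrices are precisely the generalized Cauchy matrices: $\cK_L=\cC_L$ and $\cK_R=\cC_R$.

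Substituting these identities, together with $\bE=\bI_n$ and $\bA=\bPi$, into the four factorizations of Lemma~\ref{lemma:1}, namely
\[
\IW=\bC\cK_R,\qquad \IL=-\cK_L\bE\cK_R,\qquad \sIL=-\cK_L\bA\cK_R,\qquad \IV=\cK_L\bB,
\]
produces all four claimed identities in \eqref{equ:facmimo} at once. There is no substantive obstacle; the only thing to be checked is that the realization $(\bC,\bI_n,\bPi,\bB)$ actually has transfer function \eqref{equ:zero-pole-mimo}, which is immediate from the diagonal resolvent calculation above. If a self-contained verification is preferred (not invoking Lemma~\ref{lemma:1}), one can alternatively compute the $(i,j)$ entry of $\IL$ directly from \eqref{equ:Loew_mat_def} using the pole-residue expansion \eqref{equ:zero-pole-mimo}, splitting $\frac{1}{(\mu_i-\pi_\ell)(\lambda_j-\pi_\ell)}$ via the identity $\frac{1}{\mu_i-\lambda_j}\bigl(\tfrac{1}{\mu_i-\pi_\ell}-\tfrac{1}{\lambda_j-\pi_\ell}\bigr)$, exactly in the spirit of the proof of Lemma~\ref{lemma:2}; summing over $\ell$ then recognizes the product $-\cC_L\cC_R$, and the three other identities follow by the same pattern.
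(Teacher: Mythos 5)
Your proof is correct, but it takes a different route from the paper. The paper's proof of Lemma~\ref{lemma:3} is a direct entrywise verification: it expands the $(i,j)$ entry of $\IL$ from the definition \eqref{equ:Loew_mat_def} using the pole--residue form \eqref{equ:zero-pole-mimo}, performs the partial-fraction split of $\tfrac{1}{(\mu_i-\pi_l)(\lambda_j-\pi_l)}$, and recognizes the resulting sum as the $(i,j)$ entry of $-\cC_L\cC_R$ (the remaining three identities are left as ``proven similarly''); this is exactly the alternative you sketch in your final sentence. Your main argument instead specializes Lemma~\ref{lemma:1}: you observe that for the realization $(\bC,\bI_n,\bPi,\bB)$ the resolvent is diagonal, so the tangential Krylov matrices $\cK_L,\cK_R$ of \eqref{equ:KL_KR_def} coincide entrywise with the generalized Cauchy matrices $\cC_L,\cC_R$ of \eqref{equ:CL_CR_def}, and then all four identities drop out of \eqref{factorization} at once. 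This is cleaner and arguably more in the spirit of the paper's own stated aim (the introduction announces that the Cauchy factors ``are actually Krylov projection matrices for a particular system realization,'' a connection the paper's proof of this lemma does not make explicit); it also handles all four identities uniformly rather than one at a time. The only caveat worth noting is that Lemma~\ref{lemma:1} is stated under minimality and full-rank hypotheses on $\cK_L\cK_R$; those are needed for the generalized-inverse and EVD claims but not for the factorization identities \eqref{factorization} themselves, which follow from the entrywise computation for any realization, so your reduction is legitimate. What the paper's direct computation buys is self-containedness and independence from those hypotheses; what your reduction buys is brevity and a structural explanation of why Cauchy matrices appear.
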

	
	\begin{proof}
		For a pair of indexes $1 \leq i \leq q$ and $1 \leq j \leq k$, we can write the $(i,j)$ entry of the Loewner matrix $\IL$ in (\ref{equ:Loew_mat_def}), as follows:\small
		\begin{align*}
		\IL_{(i,j)} &= \frac{ \bv_i^T\br_j-\bell_i^T\bw_j}{\mu_i-\lambda_j} = \frac{\bell_i^T\bH(\mu_i)\br_j-\bell_i^T\bH(\lambda_j )\br_j}{\mu_i-\lambda_j} = \frac{\Big( 
			\sum_{l=1}^n \frac{\bell_i^T\bc_l \bb_l^T\br_j}{\mu_j-\pi_l} - \sum_{\l=1}^n \frac{\bell_i^T\bc_l \bb_l^T\br_j}{\lambda_j-\pi_l}
			\Big)}{\mu_i-\lambda_j} \\
		&=	\sum_{l=1}^n \bell_i^T\bc_l \bb_l^T\br_j \frac{ (\lambda_j-\pi_l)-(\mu_i-\pi_l) } {(\lambda_j-\pi_l)(\mu_i-\pi_l)(\mu_i-\lambda_j)} = -\sum_{l=1}^n    \frac{\bell_i^T\bc_l}{\mu_i-\pi_l} \frac{\bb_l^T\br_j}{ \lambda_j- \pi_l}.
		\end{align*}\normalsize
		Hence, it follows that the $(i,j)$ entry of the Loewner matrix $\IL$ coincides with the $(i,j)$ entry of the matrix computed by the product $-\cC_{{L}}\cC_{{R}}$, for all $i,j$. Hence, the second equality in (\ref{equ:facmimo}) holds. The others can be proven similarly.
	\end{proof}
	
	\vspace*{2mm}
	\noindent
	One can notice that if all rows of $\bL^T$ are the same, 
	i.e. $\bL^T={\II }_q \bell^T$, then
	$$
	\cC_{{L}}=\cC_{\mu,\pi}
	\underbrace{\mbox{diag}[\bell^T\!\bc_1,\cdots,\bell^T\!\bc_n]}_{\bDelta_\bC},
	~\mbox{where}~\cC_{\mu,\pi} ~\mbox{is a }~q\times n~\mbox{ Cauchy matrix}.
	$$
	Furthermore, if all columns of $\bR$ are the same, i.e. $\bR=\br {\II }_k^T$, then
	$$
	\cC_{{R}}=
	\underbrace{\mbox{diag}[\bb_1^T\br,\cdots,\bb_n^T\br]}_{\bDelta_\bB}
	\cC^T_{\lambda,\pi}, ~\mbox{where} ~ \cC^T_{\lambda,\pi} ~\mbox{is a }~n\times k~\mbox{ Cauchy matrix}.
	$$
	If both of these conditions hold, we have that
\begin{align*}
	\IW&= \bC\bDelta_\bB
	\cC^T_{\lambda,\pi} \in\IC^{p \times k},~~
	\IL= -\cC_{\mu,\pi} \bDelta_\bC \bDelta_\bB\
	\cC^T_{\lambda,\pi},\\
	\sIL&= -\cC_{\mu,\pi} \bDelta_\bC \bPi \bDelta_\bB
	\cC^T_{\lambda,\pi} \in\IC^{q\times k},~~
	\IV= \cC_{\mu,\pi}\bDelta_\bC\bB \in\IC^{q\times m}.
	\end{align*}
	If $m=p=1$ the above relationships reduce to ~{(\ref{equ:facsiso})}.

	\subsubsection{The case of SISO systems with polynomial transfer functions}
	
	In this section we assume that the transfer function is a polynomial, i.e., ~$\bH(s)=a_{r-1}s^{r-1}+\cdots+a_1s+a_0$. For simplicity, we treat only the scalar case. A minimal realization 
	of $\bH$ is given by $\bH(s)=\bC(s\bE-\bA)^{-1}\bB$,~ where:
	\begin{equation}\label{minrealpoly}
	\bE=\bJ_r, ~~\bA=\bI_r,~~\bB=\bfe_r,~~\bC=\left[\begin{matrix}
	a_{r-1} & \cdots & a_1 & a_0 \end{matrix} \right],
	\end{equation}
	$\bJ_r$ is a $r\times r$ Jordan block with zero eigenvalues
	and ones in the superdiagonal and $\bfe_r \in \IR^r$ is 
	the $r^{\textrm th}$ unit vector. 
	To compute the Loewner pencil, 
	we choose the left interpolation points as $\mu_i$, $i=1,\cdots,q$,
	and the right interpolation points as $\lambda_j$, $j=1,\cdots,k$.
	The Loewner pencil of size $q\times k$ is then represented as:
	\begin{equation}\label{loew_poly_form}
	\left\{\begin{array}{ll}
	(\IL)_{i,j}&=\frac{\bH(\mu_i)-\bH(\lambda_j)}{\mu_i-\lambda_j}=
	a_{r-1}(\mu_i^{r-2}+\mu_i^{r-3}\lambda_j+\cdots+\mu_i\lambda_j^{r-3}+\lambda_j^{r-2})+
	\cdots+a_2(\mu_i+\lambda_j)+a_1,\\[3mm]
	{(\sIL)}_{i,j}&=\frac{\mu_i\bH(\mu_i)-\lambda_j\bH(\lambda_j)}{\mu_i-\lambda_j}=
	a_{r-1}(\mu_i^{r-1}+\mu_i^{r-2}\lambda_j+\cdots+\mu_i\lambda_j^{r-2}+\lambda_j^{r-1})+
	\cdots+a_1(\mu_i+\lambda_j)+a_0.
	\end{array}\right.
	\end{equation}
	With $x=[x_1,\cdots,x_m]$, the associated {\textit Vandermonde matrix} is:
	\begin{equation}\label{vandermonde} \footnotesize
	\bV_{\ell,m}(x)=\left[\begin{array}{cccc}
	1&1&\cdots&1\\[1mm]
	x_1&x_2&\cdots&x_m\\[1mm]
	\vdots&\vdots&\ddots&\vdots\\[1mm]
	x_1^{\ell-1}&x_2^{\ell-1}&\cdots&x_m^{\ell-1}
	\end{array}\right]\in\IC^{\ell\times m}.
	\end{equation}
	Then, based on (\ref{loew_poly_form}), and on the above definition, 
	it follows that
	\begin{equation}\label{equ:facLLspoly}
	\left\{\begin{array}{ll}
	\IL&=a_1\bV^T_{1,q}(\mu)\bV_{1,k}(\lambda)+a_2\bV^T_{2,q}(\mu) \bV_{2,k}(\lambda)+
	\cdots+a_{r-1}\bV_{r-1,q}^T(\mu)\bV_{r-1,k}(\lambda),\\[3mm]
	\sIL&=a_0\bV_{1,q}^T(\mu)\bV_{1,k}(\lambda)+a_1\bV_{2,q}^T(\mu)\bV_{2,k}(\lambda)+
	\cdots+a_{r-1}\bV_{r,q}^T(\mu)\bV_{r,k}(\lambda),
	\end{array}\right.
	\end{equation}
	and also, since $\IW_j = \bH(\lambda_j)$ and  $\IV_i = \bH(\mu_i)$, we get that
	\begin{align}\label{equ:facVWpoly} \IW=\left[\begin{array}{cccc}a_0&a_1&\cdots&a_{r-1}\end{array}\right] \bV_{r,k}(\mu) ~~\text{and}~~
	\IV=\bV_{r,q}^T(\lambda)
	\left[\begin{array}{cccc}a_0&a_1&\cdots&a_{r-1}\end{array}\right]^T.
	\end{align}
	
	\begin{lemma} \label{lemma:2poly}
		Given the Loewner quadruple introduced in (\ref{equ:facLLspoly}) 
		and (\ref{equ:facVWpoly}), the following 
		factorizations hold true:
		\begin{align}\label{equ:facpoly}
		\IW &= \hat{\bC} \bV_{r,k}(\lambda) \in\IC^{1\times k},~
		\IL= \bV_{r,q}^T(\lambda) \hat{\bE} \bV_{r,k}(\mu),\\
		\sIL&= \bV_{r,q}^T(\lambda) \hat{\bA} \bV_{r,k}(\mu) \in\IC^{q\times k},~
		\IV= \bV_{r,q}^T(\mu) \hat{\bB}   \in\IC^{q\times 1},
		\end{align}
		where the following notation is used
		\begin{equation}\label{equ:def_hatE_hatA}\footnotesize
		\hat{\bE}\! =\! \left[\!\begin{array}{cccccc}
		a_1 & a_2 &\cdots & a_{r-2}& a_{r-1}&0\\
		a_2 & a_3 & \cdots &a_{r-1} & 0&0\\a_3&a_4&\cdots&0&0&0\\
		\vdots & \vdots&\ddots &\vdots &\vdots &\vdots \\
		a_{r-1} &0&\cdots& 0   & 0&0\\
		0&0&\cdots &0&0&0\\
		\end{array}\!\right]\!,~
		\hat{\bA}\! =\! \left[\!\begin{array}{cccccc}
		a_0 & a_1 &\cdots & a_{r-3}& a_{r-2}&a_{r-1}\\
		a_1 & a_2 & \cdots &a_{r-2} & a_{r-1}&0\\
		a_2&a_3&\cdots&a_{r-1}&0&0\\
		\vdots & \vdots&\ddots &\vdots &\vdots &\vdots \\
		a_{r-2} &a_{r-1}&\cdots& 0   & 0&0\\
		a_{r-1}&0&\cdots &0&0&0\\
		\end{array}\!\right]\!,~
		\hat{\bB}\! =\! \left[\!\begin{array}{c}a_0\\a_1\\\vdots\\a_{r-1}
		\end{array}\!\right]\! = \hat{\bC}^T.
		\end{equation}
	\end{lemma}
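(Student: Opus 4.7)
The plan is to verify each of the four claimed factorizations by a direct entry-wise computation, using the explicit polynomial expressions for the Loewner pencil entries already recorded in \cref{loew_poly_form}.

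The scalar identities come first. Since $\bH(s)=\sum_{\ell=0}^{r-1}a_\ell\,s^\ell$, the $j$-th entry of $\IW$ is $\bH(\lambda_j)=[a_0,\,a_1,\,\ldots,\,a_{r-1}]\,[1,\,\lambda_j,\,\ldots,\,\lambda_j^{r-1}]^T$, which is exactly $\hat{\bC}$ times the $j$-th column of $\bV_{r,k}(\lambda)$. The identity claimed for $\IV$ is the transposed mirror image of this observation, using the relation $\hat{\bB}=\hat{\bC}^T$ recorded in \cref{equ:def_hatE_hatA}.

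For $\IL$, I would start from
$$(\IL)_{i,j}=\sum_{s=1}^{r-1}a_s\sum_{t=0}^{s-1}\mu_i^{\,s-1-t}\lambda_j^{\,t},$$
and change summation variables to $\alpha=s-t$, $\beta=t+1$, so that $s=\alpha+\beta-1$ and the region of summation becomes $\{(\alpha,\beta): \alpha,\beta\ge 1,\ \alpha+\beta\le r\}$. This rewrites the entry as $\sum_{\alpha+\beta\le r}a_{\alpha+\beta-1}\,\mu_i^{\alpha-1}\lambda_j^{\beta-1}$. Inspection of \cref{equ:def_hatE_hatA} shows that $(\hat{\bE})_{\alpha,\beta}=a_{\alpha+\beta-1}$ for $\alpha+\beta\le r$ and $(\hat{\bE})_{\alpha,\beta}=0$ otherwise, so the display is precisely the $(i,j)$ entry of $\bV_{r,q}^T(\mu)\,\hat{\bE}\,\bV_{r,k}(\lambda)$. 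The identity for $\sIL$ follows by the same change of variables applied to the second line of \cref{loew_poly_form}: the constraint shifts to $\alpha+\beta\le r+1$ and the residue becomes $a_{\alpha+\beta-2}$, matching the anti-triangular Hankel pattern of $\hat{\bA}$.

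The only real obstacle is bookkeeping: one must carefully align the ordering of the Vandermonde rows with the anti-triangular structure of $\hat{\bE}$ and $\hat{\bA}$, which is why expressing things in the $(\alpha,\beta)$ indexing is convenient. An equally valid alternative route would be to invoke \cref{lemma:1} directly for the realization \cref{minrealpoly}; since $\bJ_r$ is nilpotent one has $(\mu_i\bJ_r-\bI_r)^{-1}=-\sum_{\ell=0}^{r-1}\mu_i^{\ell}\bJ_r^{\ell}$, so the Krylov matrices $\cK_L$ and $\cK_R$ expose explicit Vandermonde factors which, upon multiplication by $\bE$ and $\bA$ from \cref{minrealpoly}, reassemble into exactly the structured coefficient matrices $\hat{\bE}$ and $\hat{\bA}$. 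The two derivations yield the same four factorizations.
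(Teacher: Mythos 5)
Your proof is correct and takes essentially the same route as the paper, whose proof simply notes that the factorizations ``directly follow'' from the expansions \textup{(\ref{equ:facLLspoly})} and \textup{(\ref{equ:facVWpoly})}; your change of variables $\alpha=s-t$, $\beta=t+1$ merely makes explicit how the rank-structured terms assemble into the anti-triangular Hankel matrices $\hat{\bE}$ and $\hat{\bA}$. One remark worth recording: your computation (correctly) yields $\IL=\bV_{r,q}^T(\mu)\,\hat{\bE}\,\bV_{r,k}(\lambda)$ and $\sIL=\bV_{r,q}^T(\mu)\,\hat{\bA}\,\bV_{r,k}(\lambda)$, consistent with \textup{(\ref{equ:facLLspoly})}, whereas the lemma as printed swaps $\mu$ and $\lambda$ in these two factorizations (an apparent typo, since $\bV_{r,q}(\lambda)$ is dimensionally inconsistent whenever $q\neq k$).
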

	\begin{proof}
		The factorizations in (\ref{equ:facpoly}) directly follow from	the representations in (\ref{equ:facLLspoly}) and in (\ref{equ:facVWpoly}).
	\end{proof}
	
	\section{Sensitivities of the Loewner pencil eigenvalues} \label{sec:sens}
	In this section, we provide definitions for sensitivities of eigenvalues of the Loewner pencil. These concepts of sensitivity arise from the perturbation of eigenvalues of the Loewner matrix pencil. In the following, we introduce the eigenvalue perturbation theory for the matrix pencil. Afterwards, we discuss the perturbation of the Loewner pencil and define two types of sensitivities, denoted with $\rho$ and $\eta$, corresponding to structured or unstructured perturbations.

	\begin{lemma} \label{lemma:5}
		For the matrix pencil $\left(\bA, \bE\right)$ that is assumed to be diagonalizable, under the perturbation of $\bar{\bA} = \bA + \bDelta_\bA$ and $\bar{\bE} = \bE + \bDelta_\bE$, the first order approximation of the eigenvalue perturbation $\pi^{(1)}$ is
		\begin{align*}	
		\pi^{(1)} &= \frac{\bp^T\left( \bDelta_\bA - \pi \bDelta_\bE \right) \bq}{\bp^T \bE \bq},
		\end{align*}
		where the eigenvalue problem of the pencil $(\bA,\bE)$ is: 
		$\bA\bq=\pi\bE\bq$,~ $\bp^T\bA=\pi\bp^T\bE$, 
		and $\pi$ is assumed to be an eigenvalue with algebraic multiplicity equal to 1.
	\end{lemma}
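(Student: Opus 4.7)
The plan is to follow the standard Rayleigh–Schrödinger style expansion adapted to generalized eigenvalue problems. I would introduce a perturbation parameter $\epsilon$ so that $\bar{\bA} = \bA + \epsilon \bDelta_\bA$ and $\bar{\bE} = \bE + \epsilon \bDelta_\bE$, and assume (since $\pi$ is simple and the pencil is diagonalizable) that the perturbed eigenpair admits a convergent expansion
\begin{equation*}
\bar{\pi} = \pi + \epsilon \pi^{(1)} + O(\epsilon^2), \qquad \bar{\bq} = \bq + \epsilon \bq^{(1)} + O(\epsilon^2).
\end{equation*}
The desired first-order sensitivity is then $\pi^{(1)}$ at $\epsilon = 1$.

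First, I would substitute these expansions into the perturbed generalized eigenvalue equation
\begin{equation*}
(\bA + \epsilon \bDelta_\bA)\bar{\bq} = \bar{\pi}\,(\bE + \epsilon \bDelta_\bE)\bar{\bq},
\end{equation*}
and collect terms by order in $\epsilon$. The $O(\epsilon^0)$ term recovers the unperturbed eigenvalue equation $\bA \bq = \pi \bE \bq$, which is assumed. The $O(\epsilon^1)$ equation reads
\begin{equation*}
\bA \bq^{(1)} + \bDelta_\bA \bq \;=\; \pi \bE \bq^{(1)} + \pi^{(1)} \bE \bq + \pi \bDelta_\bE \bq,
\end{equation*}
which can be rewritten as $(\bA - \pi \bE)\bq^{(1)} = \pi^{(1)} \bE \bq + (\pi \bDelta_\bE - \bDelta_\bA)\bq$.

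Next, I would eliminate the unknown first-order eigenvector correction $\bq^{(1)}$ by left-multiplying with the left eigenvector $\bp^T$. Since $\bp^T \bA = \pi \bp^T \bE$, the left side vanishes: $\bp^T (\bA - \pi \bE)\bq^{(1)} = 0$. This yields the scalar equation
\begin{equation*}
0 \;=\; \pi^{(1)}\, \bp^T \bE \bq \;+\; \bp^T(\pi \bDelta_\bE - \bDelta_\bA)\bq,
\end{equation*}
and solving for $\pi^{(1)}$ gives the claimed formula. The denominator $\bp^T \bE \bq$ is nonzero precisely because $\pi$ is a simple eigenvalue of the regular pencil $(\bA,\bE)$, which is guaranteed by the hypothesis of algebraic multiplicity one together with diagonalizability.

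The main obstacle, if any, is not computational but rather justifying the analyticity of $\bar{\pi}$ and $\bar{\bq}$ in $\epsilon$ near $\epsilon = 0$; this is however a standard consequence of the simplicity of $\pi$ for a regular diagonalizable pencil (see classical perturbation theory, e.g., Stewart and Sun), and only the leading term is needed for the first-order approximation. Thus no delicate estimate is required beyond the order-by-order expansion above.
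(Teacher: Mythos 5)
Your proposal is correct and follows essentially the same route as the paper: expand the perturbed generalized eigenvalue equation, retain first-order terms, and left-multiply by $\bp^T$ so that $\bp^T(\bA-\pi\bE)\bq^{(1)}=0$ eliminates the eigenvector correction. The only differences are cosmetic (introducing an explicit parameter $\epsilon$ and remarking on the nonvanishing of $\bp^T\bE\bq$ and on analyticity), which the paper leaves implicit.
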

	\begin{proof}
		A perturbation of the system yields
		$$\small
		\begin{array}{ll}
		{(\bA+\bDelta_\bA)}(\bq+\bq^{(1)}+\cdots)&=\quad
		(\pi+\pi^{(1)}+\cdots){(\bE+\bDelta_\bE)}
		(\bq+\bq^{(1)}+\cdots),\\[1mm]
		(\bp^T+\bp^{(1)^T}+\cdots){(\bA+\bDelta_\bA)}&=\quad
		(\pi+\pi^{(1)}+\cdots)(\bp^T+\bp^{(1)^T}+\cdots)
		{(\bE+\bDelta_\bE)}.
		\end{array}
		$$
		By retaining only the first-order terms, it follows that:
		$(\bA-\pi\bE)\bq^{(1)}=(\pi^{(1)}\bE+\pi\bDelta_\bE-\bDelta_\bA)\bq$.
		We wish to find an expression for $\pi^{(1)}$, which measures the first-order 
		sensitivity of $\pi$. Towards this goal we multiply this equation 
		on the left by the corresponding left eigenvector $\bp$:
		$$\small
		\begin{array}{c}
		{ \bp^T}(\bA-\pi\bE)\bq^{(1)}=
		{ \bp^T}(\pi^{(1)}\bE+\pi\Delta_\bE-\Delta_\bA)\bq~\Rightarrow
		\underbrace{{ \bp^T}\bA\bq^{(1)}-\pi{ \bp^T}\bE\bq^{(1)}}_{=0}=
		\pi^{(1)}{ \bp^T}\bE\bq+\pi{ \bp^T}\Delta_\bE\bq-
		{ \bp^T}\Delta_\bA\bq\\[6mm]~\Rightarrow
		\pi^{(1)}=\frac{\displaystyle
			{ \bp^T}\Delta_\bA\bq-\pi{ \bp^T}\Delta_\bE\bq}{\displaystyle{ \bp^T}\bE\bq}=\frac{
			\displaystyle{ \bp^T}(\Delta_\bA-\pi\Delta_\bE)\bq}
		{\displaystyle{ \bp^T}\bE\bq} .
		\end{array}
		$$	
	\end{proof}
	
	Now, consider the Loewner pencil $\left(\sIL, \IL\right)$ as a surrogate to the pencil $(\bA,\bE)$. The perturbed matrices are as follows, i.e., $\bar{\IL} = \IL + \bDelta_{L}$, ~ $\bar{\IL}_s = \sIL + \bDelta_{L_s}$. Then, by following the result in Lemma \ref{lemma:5}, we have that the first order approximation of the eigenvalue perturbation $\pi^{(1)}$ is
	\begin{equation}	\label{equ:sensitivity}
	\pi^{(1)} = \frac{\bp^T\left( \bDelta_{L_s} - \pi \bDelta_{L} \right) \bq}{\bp^T \IL \bq}.
	\end{equation}
	Note that in (\ref{equ:sensitivity}), the left/right eigenvectors denoted with 
	$\bp$, $\bq$, can be actually obtained as in (\ref{equ:lr_eigv}), or as in (\ref{equ:lr_eigv2}). It can hence be noticed that the first-order eigenvalue perturbation $\pi^{(1)}$ depends on the perturbation matrices $\bDelta_{L}$, $\bDelta_{L_s}$ corresponding to the Loewner matrices. In the upcoming subsections, we will discuss two different cases.

	\subsection{Sensitivity of the Loewner pencil eigenvalues with unstructured perturbations}

	In this subsection we consider the case of unstructured perturbation. The strength of the perturbation can be quantified by the norm of the perturbation matrix. We will show that in this case, i.e., unstructured perturbation, the sensitivity depends on the numerical condition of the Loewner pencil. Assume that the norm of perturbation matrices is bounded as follows:
	\begin{align}
	\Vert \bDelta_L \Vert_2 \le \epsilon \omega_0 ,\quad
	\Vert \bDelta_{L_s} \Vert_2 \le \epsilon\omega_1  .
	\end{align}
	where  $\omega_i\geq0$, $i=0,1$ are the weights. Straightforward calculations yield to the bound
	\begin{align}
	\vert \pi^{(1)} \vert &=  \left\vert \frac{\bp^T\left( \bDelta_{L_s} - \pi \bDelta_L \right) \bq}{\bp^T \IL \bq} \right\vert \leq   \frac{\|\bp\|_2\left| \bDelta_{L_s} - \pi \bDelta_L \right| \|\bq\|}{|\bp^T \bE \bq|}  \le \epsilon \rho,
	\end{align}
	where the sensitivity $\rho$ is given by
	\begin{equation}\label{sensitivity}
	\rho=(\omega_0+|\pi|\omega_1)\,\frac{\|\bp\|_2\|\bq\|_2}{|\bp^T\IL\bq|}.
	\end{equation}
	This formula is well known in the literature and can be found, e.g., in \cite{Luis19}.
	
	To elaborate on this expression we will make use of the system realization quoted in Lemma \ref{lemma:2}. As already mentioned, the right and left eigenvectors of the Loewner matrix pencil given by $(\sIL, \IL)$ corresponding to the eigenvalue $\pi_i$ are given, respectively, by
	$$ 
	\bq_i = (\mathcal{C}_{\lambda, \pi}^T)^{+} \mathbf{e}_i, \quad \text{and} \quad
	\bp_i = (\mathcal{C}_{\mu, \pi}^T)^{+} \mathbf{e}_i.
	$$
	Consequently, it follows that
	$\bp_i^T \IL \bq_i$ $=$ $-\mathbf{e}_i^T (\mathcal{C}_{\mu, \pi})^{+}  \cC_{\mu,\pi} \bGamma \cC^T_{\lambda,\pi} (\mathcal{C}_{\lambda, \pi}^T)^{+} \mathbf{e}_i$ 
	$=$ $-\gamma_i$.
	Next, we make a particular choice of weights given by $\omega_0=\Vert \IL \Vert_2$
	and $\omega_1=\Vert \sIL \Vert_2$ (this is typical for such problems).
	Hence, by applying the general formula in (\ref{sensitivity}), the sensitivity of the eigenvalue $\pi_i$ is given as follows:
	\begin{equation} \label{equ:sen-uncon}
	\rho_i = \frac{1}{|\gamma_i|}  \Vert \bp_i \Vert_2  ( \vert \pi_i \vert \Vert  \IL \Vert_2 +  \Vert \IL_s \Vert_2)  \Vert \bq_i \Vert_2.
	\end{equation}
	By using the factorizations in (\ref{equ:facsiso}), we get that $\IL = - \cC_{\mu,\pi} \bGamma \cC^T_{\lambda,\pi}$ and $\IL_s = -\cC_{\mu,\pi} \bPi \bGamma \cC^T_{\lambda,\pi}$, and hence, the formula in (\ref{equ:sen-uncon}) is equivalently rewritten as 
	\begin{equation} \label{equ:sen-unco2}
	\rho_i = \frac{1}{|\gamma_i|}  \Vert (\mathcal{C}_{\mu, \pi}^T)^{+} \mathbf{e}_i \Vert_2  \Big( \vert \pi_i \vert \Vert \cC_{\mu,\pi} \bGamma \cC^T_{\lambda,\pi} \Vert_2 +  \Vert\cC_{\mu,\pi} \bPi \bGamma \cC^T_{\lambda,\pi} \Vert_2 \Big) \Vert (\mathcal{C}_{\lambda, \pi}^T)^{+} \mathbf{e}_i \Vert_2.
	\end{equation}
	
	\subsubsection{Deriving error bounds} 
	
	By using the same previously mentioned factorizations of the Loewner matrices, it follows that the following inequalities hold:
	\begin{align}\label{bounds_Loew_Loews}
	\begin{split}
	\IL &= - \cC_{\mu,\pi} \bGamma \cC^T_{\lambda,\pi} \Rightarrow \| \IL \|_2  
	\leq   \|  \cC_{\mu,\pi} \|_2 \Vert \bGamma \Vert_2 \| \cC^T_{\lambda,\pi}\|_2,
	\\
	\IL_s &= -\cC_{\mu,\pi} \bPi \bGamma \cC^T_{\lambda,\pi} \Rightarrow 
	\| \IL_s \|_2 \leq   \|  \cC_{\mu,\pi} \|_2 \Vert \bPi \bGamma \Vert_2  \| 
	\cC^T_{\lambda,\pi} \|_2. 
		\end{split}
	\end{align}
	By plugging in the inequalities from (\ref{bounds_Loew_Loews}) into the extended formula (\ref{equ:sen-unco2}), we get that
	\begin{align}\label{equ:bound_prelim}
	\rho_i  &\le  \frac{1}{|\gamma_i|}  \Vert \bp_i \Vert_2 \Vert  \cC_{\mu,\pi} \Vert_2  ( \vert \pi_i \vert \Vert \bGamma \Vert_2   +   \Vert \bPi \bGamma \Vert_2  ) \Vert \cC^T_{\lambda,\pi} \Vert_2 \Vert \bq_i \Vert_2 \\
	&\le \frac{\vert \pi_i \vert \Vert \bGamma \Vert_2   +   \Vert \bPi \bGamma \Vert_2 }{|\gamma_i|}  \Vert \bp_i \Vert_2 \Vert  \cC_{\mu,\pi} \Vert_2    \Vert \cC^T_{\lambda,\pi} \Vert_2 \Vert \bq_i \Vert_2 \nonumber \\
	&\Rightarrow \rho_i \le \frac{\vert \pi_i \vert \Vert \bGamma \Vert_2   +   \Vert \bPi \bGamma \Vert_2 }{|\gamma_i|}  \Vert(\mathcal{C}_{\mu, \pi}^T)^{+} \mathbf{e}_i \Vert_2 \Vert  \cC_{\mu,\pi} \Vert_2    \Vert \cC^T_{\lambda,\pi} \Vert_2 \Vert (\mathcal{C}_{\lambda, \pi}^T)^{+} \mathbf{e}_i \Vert_2.
	\end{align}
	Using the fact that $\Vert \bX \bfe_i \Vert_2 \le \Vert \bX  \Vert_2$, 
	the bound in (\ref{equ:bound_prelim}) is rewritten as follows:
\begin{align*}
	\rho_i  &\le \frac{\vert \pi_i \vert \Vert \bGamma \Vert_2   +   
		\Vert \bPi \bGamma \Vert_2 }{\|\gamma_i|}  \Vert(\mathcal{C}_{\mu, \pi}^T)^{+}  
	\Vert_2 \Vert  \cC_{\mu,\pi} \Vert_2    \Vert \cC^T_{\lambda,\pi} \Vert_2 
	\Vert (\mathcal{C}_{\lambda, \pi}^T)^{+}  \Vert_2 \\
	&\Rightarrow \rho_i \le \frac{\vert \pi_i \vert \Vert \bGamma \Vert_2   +   
		\Vert \bPi \bGamma \Vert_2 }{|\gamma_i|}  \kappa(\cC_{\mu,\pi})   
	\kappa(\cC^T_{\lambda,\pi}),
	\end{align*}
	where $\kappa(\bX)$ denotes the condition matrix of matrix $\bX$. Next, by using the identities
	$$
	\Vert  \bGamma \Vert_2 = \max_j(\vert \gamma_j \vert), \ \ 
	\Vert \bPi \bGamma \Vert_2 = \max_j(\vert \pi_j \gamma_j \vert),
	$$
	and by substituting these equalities into the inequality above, the upper bound can be further rewritten as:
	\begin{equation}\label{equ:bound1}	
	\rho_i   \le \zeta_i \, \kappa(\cC_{\mu,\pi})
	\,\kappa(\cC_{\lambda,\pi}),
	\end{equation}
	where $\zeta_i = \displaystyle \frac{1}{\vert \gamma_i \vert} \left(\vert \pi_i \vert \displaystyle \max_j(\vert \gamma_j \vert)  
	+ \displaystyle \max_j(\vert \pi_j \gamma_j \vert) \right),
	$
	for all $1 \le i \le n$. \\
	
	In what follows we will derive bounds for the the vector of unstructured sensitivities $\boldsymbol{\rho} \in \IR^n$, defined as $ \boldsymbol{\rho} = \left[ \begin{matrix}
	\rho_1 & \rho_2 & \cdots & \rho_n
	\end{matrix} \right]^T$. Similarly, let $\boldsymbol{\zeta} \in \IR^n$. Then, from (\ref{equ:bound1}), it readily follows that
	\begin{equation}\label{equ:bound2}
	\Vert \boldsymbol{\rho} \Vert_2 \le \Vert \boldsymbol{\zeta} \Vert_2 \, \kappa(\cC_{\mu,\pi})
	\,\kappa(\cC_{\lambda,\pi}). 
	\end{equation}
	Next, by making use of the inequality $x y \le \frac{1}{2} (x^2+y^2)$, one can rewrite (\ref{equ:bound_prelim}) as
	\begin{align}\label{equ:bound_prelim2}
	\rho_i \le \frac{1}{2} \zeta_i \Big{(} \Vert(\mathcal{C}_{\mu, \pi}^T)^{+} \mathbf{e}_i \Vert_2^2 \Vert  \cC_{\mu,\pi} \Vert_2^2 +    \Vert \cC^T_{\lambda,\pi} \Vert_2^2 \Vert (\mathcal{C}_{\lambda, \pi}^T)^{+} \mathbf{e}_i \Vert_2^2 \Big{)}
	\end{align}
	Denote with $\zeta^{(\textrm max)} = \displaystyle \max_i(\zeta_i)$. By summing up the inequalities in (\ref{equ:bound_prelim2}), one can write that
	\begin{align}
	\sum_{i=1}^q \rho_i \le \frac{1}{2} \zeta^{(\textrm max)} \Big{(} \Vert  \cC_{\mu,\pi} \Vert_2^2 \sum_{i=1}^q \Vert(\mathcal{C}_{\mu, \pi}^T)^{+} \mathbf{e}_i \Vert_2^2  +    \Vert \cC^T_{\lambda,\pi} \Vert_2^2 \sum_{i=1}^q \Vert (\mathcal{C}_{\lambda, \pi}^T)^{+} \mathbf{e}_i \Vert_2^2 \Big{)},
	\end{align}
	and by using that $\sum_{i=1}^q \Vert \bX \bfe_i \Vert_2^2 = \Vert \bX \Vert_{\textrm F}^2 $, it follows that
	\begin{align}\label{equ:bound3}
	\displaystyle \Vert \boldsymbol{\rho} \Vert_1 \le \frac{1}{2} \zeta^{(\textrm max)} \Big{(} \Vert  \cC_{\mu,\pi} \Vert_2^2  \Vert(\mathcal{C}_{\mu, \pi}^T)^{+} \Vert_{\textrm F}^2  +    \Vert \cC^T_{\lambda,\pi} \Vert_2^2 \Vert (\mathcal{C}_{\lambda, \pi}^T)^{+}  \Vert_{\textrm F}^2 \Big{)}.
	\end{align}

	\subsubsection{Connections with pseudospectra}
	
	Pseudospectra represent important tools for the numerical analysis of uncertain linear systems, eigenvalue perturbations or stability study.
	
	\begin{definition}
		Given a matrix $\bA \in \mathbb{C}^{n \times n}$ and a positive real constant $\epsilon >0$, the $\epsilon$-pseudospectrum of $\bA$ is:
		\begin{equation}\label{equ:pseudoA_def}
		\sigma_{\epsilon}(\bA) = \{ z \in \mathbb{C} \vert \text{is an eigenvalue of} \ \bA + \bGamma \ \text{for some} \ \bGamma \in \mathbb{C}^{n \times n} \ \text{with} \ \Vert \bGamma \Vert < \epsilon \}.
		\end{equation}
	\end{definition}
	Note that, for all $\epsilon > 0$, $\sigma_\epsilon(\bA)$ is a bounded, open subset of the complex plane that contains the eigenvalues of $\bA$.
	
	The concept of pseudospectrum has been extended over the years to cope with more general eigenvalue problems and dynamical systems. More precisely, we are interested in extensions of (\ref{equ:pseudoA_def}) to matrix pencils $(\bA,\bE)$. In \cite{Embree19}, one definition for the pseudospectrum of matrix pencil is mentioned. In order to allow matrix perturbations of both $\bA$ and $\bE$ to be scaled independently, this definition below includes two additional parameters, denoted with $\nu$ and $\delta$.
	\begin{definition}
		Let $\nu, \delta > 0$. For matrix pencil $\left(\bA, \bE\right) \in \mathbb{C}^{n \times n}$ and $\forall \epsilon \ge 0$, the $\epsilon$-$\left(\nu, \delta\right)$-pseudospectrum $\sigma_\epsilon^{\left(\nu, \delta\right)} \left(\bA, \bE\right)$ of the matrix pencil $\pi \bE- \bA$ is the set
		\begin{align}\label{equ:pseudoAE_def}
		\begin{split}
		\sigma_\epsilon^{\left(\nu, \delta\right)} \left(\bA, \bE\right) =& 
		\{
		\pi \in \mathbb{C} ~ is ~ an ~eigenvalue~of~the~pencil~\pi \left(\bE + \epsilon \bDelta_{\bE}\right) - \left( \bA + \epsilon\bDelta_{\bA}\right) \\
		& for~some~\bDelta_{\bE}, \bDelta_{\bA} \in \mathbb{C}^{n\times n} ~ with~\Vert \bDelta_{\bE} \Vert = \nu, \Vert\bDelta_{\bA} \Vert = \delta \}.
		\end{split}
		\end{align}
	\end{definition}
	As also illustrated in \cite{Embree19}, the pseudospectra provides a useful tool to explore the sensitivity of eigenvalues of matrix pencils. It is easy to see that the special points of the pseudospectra for $\epsilon = 0$ are precisely the eigenvalues of the matrix pencil $\left(\bA, \bE\right)$.
	Moreover, the slope of the pseudospectra around an eigenvalue $\pi$ can be used as a scale for eigenvalue sensitivity. If the slope is large, it means that the eigenvalue perturbation $\delta \pi$ is small for large perturbations applied to the pencil. Similarly, when the slope is small, it means that the eigenvalue perturbation $\delta \pi$ is large with a small perturbation of the matrix pencil.
	The sensitivity value $\rho_i$ introduced in (\ref{equ:sen-uncon}) is connected to the pseudospectra in the sense that both can be used to measure eigenvalue perturbations when the matrix pencil is perturbed and quantified by its norm. 
	
	For $\epsilon_2>\epsilon_1 > 0$, it follows that the slope of the pseudospectra near the eigenvalue $\pi$ can be defined as
	\begin{equation}\label{equ:slope_def}
	\xi = \lim_{\epsilon_1, \epsilon_2 \rightarrow 0} \left\vert \frac{\pi_2^{(1)} - \pi_1^{(1)}}{\epsilon_2 - \epsilon_1} \right\vert,
	\end{equation}
	where $\pi_i$ is the eigenvalue of matrix pencil $\left(\bE + \epsilon_i \bDelta_{\bE},  \bA + \epsilon_i\bDelta_{\bA}\right)$, for $i=1,2$ that is closest to $\pi$. Using the first order approximation of the eigenvalue perturbation, we obtain that
	\begin{align}\label{equ:bound_pi}
	\pi_i^{(1)} &=  \left\vert \frac{\bp^T\left( \epsilon_{i}\bDelta_\bA - \pi \epsilon_{i}\bDelta_\bE \right) \bq}{\bp^T \bE \bq} \right\vert =  \epsilon_{i} \left\vert \frac{\bp^T\left( \bDelta_\bA - \pi \bDelta_\bE \right) \bq}{\bp^T \bE \bq} \right\vert \le \epsilon_i \rho, \ \forall i=1,2.
	\end{align}
	Then, from (\ref{equ:slope_def}) and (\ref{equ:bound_pi}), it automatically follows that $\xi \le \rho$. Hence, the sensitivity $\rho$ provides an upper bound for the slope of the pseudospectra in the neighborhood of eigenvalue $\pi$. 
	
	It is also mentioned in \cite{Embree19} that analyzing structured pseudospectra could provide additional insight, i.e. the perturbed matrices have the same structure as the unperturbed (they are also Loewner matrices). Additionally, it could also be  helpful to explore the eigenvalue sensitivity of Loewner pencils in the case of noisy data. 
	The efficient computation for pseudospectra is still a matter of ongoing research. A recent result is provided in \cite{Embree19}, e.g., in Section 4, where the authors propose methods to accelerate this computation for (large) structured Loewner pencils and hence reduce the cost from $O(n^3)$ to $O(n^2)$ operations. In the next section, we explore the eigenvalue sensitivity of the Loewner pencil with respect to a structured perturbation.

	\subsection{Sensitivity of the eigenvalues with structured perturbations}
	
	In this section, we will study structured perturbations, i.e., the perturbation matrices have a particular structure. When the data are perturbed, e.g., the measurements are corrupted by additive noise, the perturbation matrices are indeed Loewner matrices. Recently, the robustness of the Loewner framework with respect to noise in the transfer-function values was studied in \cite{Zlatko19}. There, a statistical analysis was provided for bounding the deviation between the transfer function of noisy Loewner models and that of Loewner models without noise (see, e.g., Theorem 1 in Section 4). The eigenvalue sensitivity analysis with respect to noise in the data is significant for assessing the robustness of the Loewner models. In what follows, we will treat only the case of SISO systems (the MIMO case follows equivalently by adapting the factorization formulas as was shown in Section \ref{sec:MIMO_str_prop}).
	
	\subsubsection{Distinct left and right interpolation points  ($\bmu \ne \blambda$)}
	\label{sec:Sens_struct_distinct}
	
	Introduce the perturbed transfer function defined as  $\bar{\bH}(s) = \bH(s) (1 + \epsilon_s)$, and consider the left and right measurements corresponding to this transfer function under the influence of perturbation, i.e., 
	\begin{align}	
	\bar{\bH}(\epsilon_{\mu_i}) = \bH(\epsilon_{\mu_i}) (1 + \epsilon_{\mu_i}) = \bv_i (1 + \epsilon_{\mu_i}), \ \ \text{and}, \ \ \bar{\bH}(\epsilon_{\lambda_j}) = \bH(\epsilon_{\lambda_j}) (1 + \epsilon_{\lambda_j})= \bw_j (1 + \epsilon_{\lambda_j}),
	\end{align}
	for $1 \le i \le q$ and $1 \le j \le k$. Define the diagonal matrices $\cV \in \mathbb{C}^{q \times q}$ and $\cW \in \mathbb{C}^{k \times k}$ as
	$$
	\mathcal{V} = \mbox{diag}(\bv_1,\bv_2,\cdots,\bv_q), \ \ \mathcal{W} = \mbox{diag}(\bw_1,\bw_2,\cdots,\bw_k).
	$$
	Then, the following relations hold
	\begin{equation}
	\IL = \mathcal{V} \mathcal{C}_{\mu, \lambda} - \mathcal{C}_{\mu, \lambda}
	\mathcal{W},~~ \sIL = \bM \mathcal{V} \mathcal{C}_{\mu, \lambda} - \mathcal{C}_{\mu, \lambda} \mathcal{W} \bLambda.
	\end{equation}
	
	Similarly, the perturbation matrices can be written as
	\begin{align}\label{equ:per_mat_def}
	\begin{split}
	\bDelta_{L} &= \mbox{diag}(\epsilon_{\mu_1}, \epsilon_{\mu_2}, \cdots, \epsilon_{\mu_q}) 
	\mathcal{V} \mathcal{C}_{\mu, \lambda} - \mathcal{C}_{\mu, \lambda} \mathcal{W} 
	\mbox{diag}(\epsilon_{\lambda_1}, \epsilon_{\lambda_2}, 
	\cdots, \epsilon_{\lambda_k}),\\[1mm] 
	\bDelta_{L_s}  &= \mbox{diag}(\epsilon_{\mu_1}, \epsilon_{\mu_2}, \cdots, \epsilon_{\mu_q})
	\bM \mathcal{V} \mathcal{C}_{\mu, \lambda} - \mathcal{C}_{\mu, \lambda} \mathcal{W}
	\bLambda\mbox{diag}(\epsilon_{\lambda_1}, \epsilon_{\lambda_2}, \cdots, 
	\epsilon_{\lambda_k}).
	\end{split}
	\end{align}
	From (\ref{equ:per_mat_def}), it follows that the perturbation pencil is expressed as
	\begin{align}\label{equ:per_penc_def}
	s \bDelta_{L}  - \bDelta_{L_s}  & = \mbox{diag}(\epsilon_{\mu_1}, \epsilon_{\mu_2}, \cdots, \epsilon_{\mu_q}) (s\bI - \bM) \mathcal{V} \mathcal{C}_{\mu, \lambda} - \mathcal{C}_{\mu, \lambda} \mathcal{W} (s\bI - \bLambda)
	\mbox{diag}(\epsilon_{\lambda_1},\epsilon_{\lambda_2}, \cdots, \epsilon_{\lambda_k}).
	\end{align}
	By substituting (\ref{equ:per_penc_def}) into (\ref{equ:sensitivity}), it follows that the first order approximation of the eigenvalue perturbation corresponding to the pole $\pi_i$, for $1 \leq i \leq n$, is given by
	\begin{equation}	\label{equ:loewpert}
	\left.\begin{split}
	\pi_i^{(1)}~ &=\frac{ \bp_i^T\left(  \bDelta_{L_s}  -\pi_i \bDelta_{L}   \right) \bq_i}{\bp_i^T \IL \bq_i}=\frac{1}{\gamma_i} \bp_i^T\left(  \pi_i \bDelta_{L}  - \bDelta_{L_s}  \right) \bq_i \\
	&= \frac{1}{\gamma_i}\boldsymbol{\epsilon}_\mu^T \mbox{diag}(\bp_i) ( \pi_i \bI - \bM) \mathcal{V} \mathcal{C}_{\mu, \lambda} \bq_i - \frac{1}{\gamma_i}\bp_i^T\mathcal{C}_{\mu, \lambda} \mathcal{W} (\pi_i \bI - \bLambda)
	\mbox{diag}(\bq_i) \boldsymbol{\epsilon}_\lambda \\
	& = 
	\left[ \begin{matrix}
	\boldsymbol{\epsilon}_\mu^T &
	\boldsymbol{\epsilon}_\lambda^T
	\end{matrix}\right] \frac{1}{\gamma_i}\left[\begin{matrix}
	\mbox{diag}(\bp_i) ( \pi_i \bI - \bM) \mathcal{V} \mathcal{C}_{\mu, \lambda} \bq_i &   \mbox{diag}(\bq_i) (\pi_i\bI - \bLambda) \mathcal{W} \mathcal{C}_{\lambda,\mu}  \bp_i
	\end{matrix}\right]  \\
	& = \boldsymbol{\epsilon}^T \left(\cS \bfe_i \right) = \boldsymbol{\epsilon}^T \cS_i,\\
	\end{split}~\right.,
	\end{equation}
	where $\boldsymbol{\epsilon}_\mu = \left[\epsilon_{\mu_1}, \cdots, \epsilon_{\mu_q} \right]^T \in \IC^q$, ~$\boldsymbol{\epsilon}_\lambda = \left[\epsilon_{\lambda_1}, \cdots, \epsilon_{\lambda_k} \right]^T \in \IC^k$,  $\boldsymbol{\epsilon} = \left[ \begin{matrix}
	\boldsymbol{\epsilon}_\mu \\
	\boldsymbol{\epsilon}_\lambda
	\end{matrix}\right] \in \IC^{q+k}$, and $\cS = \left[ \begin{matrix}
	\cS_\mu \\
	\cS_\lambda
	\end{matrix}\right] \in \IC^{(q+k) \times n}$. Additionally, the $i$th column of matrix $\cS$ is denoted with $\cS_i$, while the $i$th columns of matrices $\cS_\mu \in \IC^{q \times n}$ and $\cS_\lambda \in \IC^{k \times n}$, for all $1 \leq i \leq n$, are given by: 
	\begin{align}
	\begin{cases}{\cS_\mu}_i =  \cS_\mu \bfe_i  = \frac{1}{\gamma_i}
	\begin{matrix}
	\mbox{diag}(\bp_i) (\pi_i\bI - \bM)\mathcal{V}\mathcal{C}_{\mu, \lambda}\bq_i   
	\end{matrix} \in \IC^{q}, \\[2mm] {\cS_\lambda}_i =  \cS_\lambda \bfe_i  = \frac{1}{\gamma_i}
	\begin{matrix}
	\mbox{diag}(\bq_i) (\pi_i\bI - \bLambda) \mathcal{W} \mathcal{C}_{\lambda,\mu}  \bp_i
	\end{matrix} \in \IC^{k}
	\end{cases}, \text{and} \ \ \cS_i = \left[ \begin{matrix}
	{\cS_\mu}_i \\
	{\cS_\lambda}_i
	\end{matrix}\right] \in \IC^{(q+k)}.
	\end{align}
	
	As shown in (\ref{equ:loewpert}), the eigenvalue perturbation is a linear combination of the left and right measurement noise.
	By considering noise in the left measurements, the effect on perturbing the poles can be hence quantified by the entries of matrix  $\cS_\mu \in \IC^{q \times n}$. Additionally, by choosing noisy right measurements, the perturbation of the poles is hence quantified by the entries of matrix  $\cS_\lambda \in \IC^{k \times n}$.
	
	
	Next, a simplified case is considered in order to better understand the structure of matrix $\cS$. We consider the case for which we quantify how perturbing the $j$th left measurement is affecting the $i$th pole $\pi_i$. Using that $\bfe_j^T \mbox{diag}(\bp_i) = \bp_i^T \bfe_j\bfe_j^T$, it follows that the $(j,i)$ entry of matrix  $\cS_\mu$ is explicitly given by the following formula
	\begin{align}\label{equ:Smu}
	\begin{split}
	\begin{array}{rcl}
	\left( \cS_\mu \right)_{j,i} = \bfe_j^T \cS_\mu \bfe_i
	&=& \bfe_j^T \frac{1}{\gamma_i}\mbox{diag}(\bp_i) (\pi_i\bI - \bM)\mathcal{V}\mathcal{C}_{\mu, \lambda}\bq_i=\frac{1}{\gamma_i}
	\bp_i^T\left[\bfe_j\bfe_j^T\,(\pi_i\bI-\bM)
	\mathcal{V} \cC_{\mu,\lambda}\right] \,\bq_i\\[2mm]
	&=&\frac{1}{\gamma_i}
	\bfe_i^T(\cC_{\mu,\pi})^+
	\left[\bfe_j\bfe_j^T\,(\pi_i\bI-\bM)
	\mathcal{V} \cC_{\mu,\lambda}\right] \,(\cC_{\lambda,\pi}^T)^+\bfe_i\\[2mm]
	&=&\frac{\left(\pi_i-\mu_j\right)\bv_j}{\gamma_i}
	\left(\bfe_i^T(\cC_{\mu,\pi})^+\bfe_j\right)
	\left(\bfe_j^T(\cC_{\mu,\lambda})(\cC_{\lambda,\pi}^T)^+\bfe_i\right).
	\end{array}
	\end{split}
	\end{align}
	Similarly, assume now that the $j^{th}$ right measurement is perturbed and we would like to measure the influence on the $i$th pole. We have that the $(j,i)$ entry of matrix  $\cS_\lambda$ is explicitly given as
	\begin{align}\label{equ:Sla}
	\begin{split}
	\begin{array}{rcl}
	\left( \cS_\lambda \right)_{j,i} = \bfe_j^T \cS_\lambda \bfe_i
	&=& \frac{\left(\pi_i-\lambda_j\right)\bw_j}{\gamma_i}
	\left(\bfe_i^T(\cC_{\lambda,\pi})^+\bfe_j\right)
	\left(\bfe_j^T(\cC_{\lambda,\mu})(\cC_{\mu,\pi}^T)^+\bfe_i\right).
	\end{array}
	\end{split}
	\end{align}
	In what follows, consider a special case for which the number of poles equals to the number of left and right interpolation points, i.e., $k=q=n$. The inverse of a square Cauchy matrix $\cC_{x,y} \in \mathbb{C}^{n \times n}$ is explicitly expressed as
	\begin{equation} \label{equ:inv_cauchy_def}
	\left(\cC_{x,y}^{-1}\right)_{i, j} = \frac{\prod_{k=1}^{n} \left(x_j - y_k\right) \prod_{k=1}^{n} \left(x_k - y_i\right)}{\left(x_j - y_i\right) \prod_{k \ne j} \left(x_j - x_k\right) \prod_{k \ne i} \left(y_k - y_i\right)}.
	\end{equation} 
	
	By substituting the result (\ref{equ:inv_cauchy_def}) in the formula (\ref{equ:Smu}), we obtain that the $(j,i)$ entry of matrix $\cS_\mu$ can be explicitly written in terms of the poles and the left and right interpolation points, as follows
	\begin{align}\label{equ:Smu_exp}
	\begin{split}
	\left( \cS_\mu \right)_{j,i}
	&=\frac{\left(\pi_i-\mu_j\right)\bv_j}{\gamma_i}
	\left(\bfe_\ell^T(\cC_{\mu,\pi})^+\bfe_i\right)
	\left(\bfe_i^T(\cC_{\mu,\lambda})(\cC_{\lambda,\pi}^T)^+\bfe_\ell\right) \\
	& = -\frac{ \bv_j}{\gamma_i} \frac{\prod_{k=1}^{n} \left(\mu_j - \pi_k\right)  \left(\pi_i - \mu_k\right)  \left( \pi_i - \lambda_k \right)}{\prod_{k \ne j} \left(\mu_j - \mu_k\right) \prod_{k \ne i} \left( \pi_i - \pi_k\right)^2}
	\left(
	\sum_{m=1}^{n} 
	\frac{\prod_{k\ne i}^{} \left(\pi_k - \lambda_m \right)}{\left(\mu_j - \lambda_m \right)
		\prod_{k \ne m} \left(\lambda_k - \lambda_m\right)} \right).
	\end{split}
	\end{align}
	Similar derivations can be obtained for the $(j,i)$ entry of the matrix $\cS_\lambda$. We will illustrate the results presented in  formula (\ref{equ:Smu_exp}) by means of a couple of simplified scenarios. 
	\begin{example}
		Choose $n = q = k =1$ and $\bH(s) = \frac{\gamma_1}{s- \pi_1}$. Hence, one can then write
		\begin{equation}
		\left( \cS_\mu \right)_{1,1} = \frac{\bv_1}{\gamma_1} \frac{\left(\pi_{1}-\lambda_{1}\right)}{\left(\mu_{1}-\lambda_{1}\right)} \left(\mu_{1}-\pi_1\right)^2.
		\end{equation}
		Next, choose $n = q = k =2$ and $\bH(s) = \frac{\gamma_1}{s- \pi_1}+  \frac{\gamma_2}{s- \pi_2}$. Hence, the following hold for $1 \leq i \leq 2$
		\begin{align}
		\left( \cS_\mu \right)_{1,i} &=  \frac{\bv_i}{\gamma_i}   \frac{\left(\pi_{i}-\lambda_{1}\right)\,\left(\pi_{i}-\lambda_{2}\right)\,\left(\pi_{i}-\mu_{2}\right)}{\left(\mu_{1}-\lambda_{1}\right)\,\left(\mu_{1}-\lambda_{2}\right)\,\left(\mu_{1}-\mu_{2}\right)} \frac{\left(\mu_{1}-\pi_{2}\right)^2\,\left(\mu_{1}-\pi_{1}\right)^2}{\left(\pi_{1}-\pi_{2}\right)^2}, \\
		\left( \cS_\mu \right)_{2,i} &=  \frac{\bv_i}{\gamma_i}   \frac{\left(\pi_{i}-\lambda_{1}\right)\,\left(\pi_{i}-\lambda_{2}\right)\,\left(\pi_{i}-\mu_{1}\right)}{\left(\mu_{2}-\lambda_{1}\right)\,\left(\mu_{2}-\lambda_{2}\right)\,\left(\mu_{2}-\mu_{1}\right)} \frac{\left(\mu_{2}-\pi_{2}\right)^2\,\left(\mu_{2}-\pi_{1}\right)^2}{\left(\pi_{1}-\pi_{2}\right)^2}.
		\end{align}
		Similar formulas can be derived for $\left( \cS_\lambda \right)_{j,i}$, but will be omitted here.
	\end{example}

	\begin{definition}
		Let $\eta_{(j, i)}$ be the structured sensitivity  defined for eigenvalue $\pi_i$ with respect to perturbing the $j^{th}$ left or right measurement. The value $\eta_{(j, i)}$ is explicitly given by
		\begin{equation} \label{equ:sen-con}
		\eta_{(j,i)} = \vert \left(\cS \right)_{j,i} \vert =  \begin{cases}
		\vert \left(\cS_\mu \right)_{j,i} \vert, \ \ \text{if} \ \ 1\leq j \leq q,\\
		\vert \left(\cS_\lambda \right)_{j,i} \vert, \ \ \text{if} \ \  q+1\leq j \leq q+k,
		\end{cases}
		\end{equation}	
		Let $\cN \in \IR^{(q+k)\times n}$ be the matrix containing all structured sensitivity values in (\ref{equ:sen-con}). Furthermore, we split matrix $\cN = \left[ \begin{matrix}
		\cN_\mu \\
		\cN_\lambda
		\end{matrix}\right]$ into two sub-matrices, $\cN_{\mu} \in \IR^{q\times n}$ and  $\cN_{\lambda} \in \IR^{k\times n}$, corresponding to the left and right measurements, so that $\cN_{\mu} = \vert \cS_\mu \vert$, and $\cN_{\lambda} = \vert \cS_\lambda \vert$.
	\end{definition}
	
	\begin{remark}
		It is to be noted that the structured sensitivity formula given in (\ref{equ:sen-con}) is proportional to the left and right measurements. Alternatively, we could also introduce the absolute structured sensitivity  defined for eigenvalue $\pi_i$ with respect to perturbing the $j^{th}$ left or right measurement. This is denoted by $\overline{\eta}_{(j, i)}$, and is explicitly given by 
		\begin{equation}
		\overline{\eta}_{(j, i)} = \begin{cases}
		\eta_{(j, i)}/\bv_j, \ \ \text{if} \ \ 1\leq j \leq q,\\
		\eta_{(j, i)}/\bw_j, \ \ \text{if} \ \ q+1\leq j \leq q+k.
		\end{cases}
		\end{equation}
	\end{remark}
	
	The variance of the eigenvalue perturbation $\pi_i^{(1)}$ is given by
	$$\mbox{Var}(\pi_i^{(1)}) = {E} \left[ \left(\cS_i\right)^T\boldsymbol{\epsilon} \boldsymbol{\epsilon}^H\left(\cS_i\right)^* \right] = 
	\left(\cS_i\right)^T{E}  \left[ \boldsymbol{\epsilon} \boldsymbol{\epsilon}^H \right]\left(\cS_i\right)^*.
	$$
	
	Assuming that the noise is Gaussian (which is typical the case in many practical situations), it follows that  ~${E}  \left[ \boldsymbol{\epsilon} \boldsymbol{\epsilon}^H \right] = 
	\sigma_{\bepsilon}^2\mathbf{I}$, and hence, we can write the variance and the standard deviation of $\pi_i^{(1)}$ as
	\begin{equation}\label{equ:std-gaussian-loew}
	\mbox{Var}(\pi_i^{(1)}) = \sigma_{\bepsilon}^2 \left\Vert \cN_i \right\Vert_2^2, \ \ \sigma(\pi_i^{(1)}) = \sigma_{\bepsilon} \left\Vert \cN_i \right\Vert_2.
	\end{equation}
	Hence, conclude that $\left\Vert \cN_i \right\Vert_2$ determines the standard deviation of eigenvalue perturbation $\pi_i^{(1)}$.
	
	\begin{definition}
		The eigenvalue sensitivity $\bbeta_i$ is defined for the eigenvalue $\pi_i$  with respect to a structured perturbation, as the norm of column vector $\cN_i$. Hence, introduce the vector $\bbeta \in \IR^n$ such that its $i$th entry is
		\begin{equation} \label{equ:sen-con1}
		\bbeta_i = \left\Vert \cN_i \right\Vert_2 = \sqrt{\eta_{( 1,i)}^2 +\eta_{(2,i)}^2 + \ldots + \eta_{(q+k,i)}^2}.
		\end{equation}
	\end{definition}

	\begin{example}\label{ex:4.2}
		Consider the system characterized by the following realization:
		\begin{equation}\label{real:ex4.2}
		\bA =
		\left[\!\begin{array}{rrrrr} -1 & 0 & 0 & 0 & 0\\[1mm] 0 & -1 & 1 & 0 & 0\\[1mm] 0 & -1 & -1 & 0 & 0\\[1mm] 0 & 0 & 0 & -\frac{1}{2} & \frac{\sqrt{3}}{2}\\[1mm] 0 & 0 & 0 & -\frac{\sqrt{3}}{2} & -\frac{1}{2} \end{array}\!\right],~
		\bB =\left[\!\begin{array}{c} 1\\[1mm] 1\\[1mm] 0\\[1mm] 1\\[1mm] 0 \end{array}\!\right],~
		\bC =\left[\!\begin{array}{ccccc} 1 & 0 & 1 & 0 & \frac{2\,\sqrt{3}}{3} \end{array}\!\right].
		\end{equation}
		The poles, residues and the transfer function of this system are as follows 
		\small
		$$
		\begin{cases}
		\bpi~=~
		\left[\!\begin{array}{rrrrr} -1 & -1-\mathrm{i} & -1+\mathrm{i} & -\frac{1}{2}-\frac{\sqrt{3}}{2}\mathrm{i}& -\frac{1}{2}+\frac{\sqrt{3}}{2}\mathrm{i}\end{array}\!\right],\\\
		\bgamma=
		\left[\!\begin{array}{rrrrr} 1& -\frac{1}{2}{}\mathrm{i}& \frac{1}{2}{}\mathrm{i}& -\frac{\sqrt{3}}{3}\mathrm{i}& \frac{\sqrt{3}}{3} \mathrm{i}\end{array}\right], 	\end{cases} \hspace{-5mm}\Rightarrow~~
		\bH(s)= \frac{s^4+s^3-2\,s-1}{\left(s+1\right)\,\left(s^2+2\,s+2\right)\,\left(s^2+s+1\right)},
		$$
		\normalsize
		where $\bpi$ is the vector of poles and $\bgamma$ the vector of residues. Choose the right/left interpolation points as:
		$$
		\blambda =\left[\begin{array}{ccccc} \frac{2}{9} & \frac{4}{9} & \frac{6}{9} & \frac{8}{9} & \frac{10}{9} \end{array}\right],~\bmu = -\blambda^T.
		$$
		Then, put together the various Cauchy matrices:
		$\cC_{\mu,\pi}$,~
		$\cC_{\mu,\lambda}$,~ and
		$\cC_{\pi,\lambda}$.  Hence, compute the matrix $\cN_\mu \in \IR^{5 \times 5}$ of sensitivity associated to the left measurements as well as matrix $\cN_\lambda \in \IR^{5 \times 5}$ of sensitivity associated to the right measurements, as follows
		\footnotesize
		$$
		\frac{\cN_\mu}{10^2 } = \left[\begin{array}{rrrrr}
		0.0068  &  4.1910  &   4.1910   & 0.7128   &  0.7128 \\
		0.0016  &   0.7972  &   0.7972   &  0.1283   &  0.1283 \\
		0.0012   &  0.3757  &   0.3757   &  0.0548   &  0.0548 \\
		0.0011   &  0.1224  &   0.1224   &  0.0158  &   0.0158 \\
		0.0003   &  0.0375   &  0.0375   &  0.0043   &  0.0043
		\end{array} \right], \ \ \ \frac{\cN_\lambda}{10^3} =  
		\left[\begin{array}{rrrrr}
		0.0041  &  3.2017  &  3.2017   & 0.5474  &  0.5474 \\
		0.0093  &  7.6490  &  7.6490  &  1.2804  &  1.2804 \\
		0.0086  &  7.3720  &  7.3720  &  1.2041  &  1.2041 \\
		0.0031  &  2.7251  &  2.7251  & 0.4345 &   0.4345 \\
		0.0001  &  0.1169  &  0.1169  & 0.0182  &  0.0182
		\end{array} \right].
		$$
		\normalsize
		By examining the entries of $\cN_\mu$, it follows that the pair of poles $-1\pm i$ are the most sensitive, especially when perturbing the first left measurement value. This can be observed in the $(1,2)$ and $(1,3)$ entries of matrix $\cN_\mu$.
		
		Similarly, from the entries of $\cN_\lambda$, we conclude that the pair of poles $-1\pm i$ are the most sensitive. The largest entry is obtained when perturbing the second right measurement value. Again, this can be observed in the $(2,2)$ and $(2,3)$ entries of matrix $\cN_\lambda$.
		Additionally, compute the vector $\bbeta \in \IR^5$ as in (\ref{equ:sen-con1}):
		\begin{equation}
		\bbeta = 10^4 \left[ \begin{matrix}
		0.0014  &  1.1434  &  1.1434  &  0.1893  &  0.1893
		\end{matrix} \right]^T.
		\end{equation}
		From the above result, we indeed draw the conclusion that the pair of poles $-1\pm i$, is by far the most sensitive, also in an $\ell_2$ way (since $\bbeta_2$ and $\bbeta_3$ are the largest entries).
		
		Finally, it is to be noted that the sensitivities computed for matrix $\bA$ corresponding to the canonical realization in (\ref{real:ex4.2}) are all ones. In this case, the pencil $(\bA,\bE)$ simplifies to matrix $\bA$, since $\bE$ is the identity matrix. Hence, the choice of interpolation points can greatly influences the sensitivity computations.
		
	\end{example}

	\subsubsection{Same left and right interpolation 
		points ($\bmu = \blambda$)}
	\label{sec:Sens_struct_same}
	
	For the case $\bmu = \blambda$, the diagonal entries of the Loewner matrix are actually derivatives of transfer function $\bH(s)$. Considering that the measurements are perturbed, we assume that $\bar{\bH}(s) = \bH(s) (1 + \epsilon_s)$, and, additionally that $\bar{\bH}'(s) = \bH'(s) (1 + \epsilon'_s)$. Here, $\bH'(s)$ denotes the derivative of $\bH(s)$ w.r.t s, i.e.,   $\bH'(s) = \frac{d}{ds} \bH(s)$. The Loewner and shifted Loewner matrices can be written as follows
	\begin{equation}
	\IL = \mathcal{V} \mathcal{C}_{\mu, \mu} - \mathcal{C}_{\mu, \mu}
	\mathcal{V} +  \mathcal{V}',~~ \sIL = \bM \mathcal{V} \mathcal{C}_{\mu, \mu} - \mathcal{C}_{\mu, \mu} \mathcal{V} \bM + \bM \mathcal{V}' + \mathcal{V},
	\end{equation}
	where
	$\small
	\mathcal{V}' = \mbox{diag}(\bv_1',\bv_2',\cdots,\bv_q')$ with $\bv_i' = \bH'(\mu_i)$. Additionally,  note that $\mathcal{C}_{\mu, \mu}(i,i) = 0$ for all $i =1,\ldots,q $. The perturbation matrices $\bDelta_{L}$ and $\bDelta_{L_s}$ can be written as
	\begin{align}\label{equ:per_mat_def2}
	\begin{split}
	\bDelta_{L} &= \mbox{diag}(\epsilon_{\mu_1}, \cdots, \epsilon_{\mu_q})
	\mathcal{V} \mathcal{C}_{\mu, \mu} - \mathcal{C}_{\mu, \mu} \mathcal{V} 
	\mbox{diag}(\epsilon_{\mu_1}, \cdots, \epsilon_{\mu_q}) +  \mathcal{V}'\mbox{diag}(\epsilon_{\mu_1}', \cdots, \epsilon_{\mu_q}'), 
	\\[1mm] 
	\bDelta_{L_s}&= \mbox{diag}(\epsilon_{\mu_1}, \cdots, \epsilon_{\mu_q})
	\bM \mathcal{V} \mathcal{C}_{\mu, \mu} - \mathcal{C}_{\mu, \mu} \mathcal{V}
	\bM\mbox{diag}(\epsilon_{\mu_1}, \cdots, \epsilon_{\mu_q}) +  \mathcal{V}'\bM \mbox{diag}(\epsilon_{\mu_1}', \cdots, \epsilon_{\mu_q}') \\
	&+ \mbox{diag}(\epsilon_{\mu_1}, \cdots, \epsilon_{\mu_q}) \cV. 
	\end{split}
	\end{align}
	Since $\bmu = \blambda$, the left/right eigenvectors of the Loewner pencil are the same, i.e., $\bp = \bq$. From (\ref{equ:per_mat_def2}), it follows that
	\begin{align*}\small
	s \bDelta_{L} - \bDelta_{L_s}  &= \mbox{diag}(\epsilon_{\mu_1}, \cdots, \epsilon_{\mu_q}) (s\bI - \bM) \mathcal{V} \mathcal{C}_{\mu, \mu} - \mathcal{C}_{\mu, \mu} \mathcal{V} (s\bI - \bM)
	\mbox{diag}(\epsilon_{\mu_1}, \cdots, \epsilon_{\mu_q}) \\
	+&   \mathcal{V}' (s\bI - \bM) \mbox{diag}(\epsilon_{\mu_1}', \cdots, \epsilon_{\mu_q}') - \mbox{diag}(\epsilon_{\mu_1}, \cdots, \epsilon_{\mu_q}) \cV.
	\end{align*}
	By substituting the above expression into (\ref{equ:sensitivity}), it follows that the first order approximation of the eigenvalue perturbation corresponding to $\pi_i$ is given by
	\begin{equation}
	\hspace*{-3mm}	
	\begin{split}
	\pi_i^{(1)} &= \frac{ \bp_i^T\left(  \bDelta_{L_s}  -\pi_i \bDelta_{L}   \right) \bq_i}{\bp_i^T \IL \bq_i}  = \frac{1}{\gamma_i} \bp_i^T\left(  \pi_i \bDelta_{L} - \bDelta_{L_s} \right) \bq_i \\
	&= 
	\frac{1}{\gamma_i} \left[
	\boldsymbol{\epsilon}_\mu^T \mbox{diag}(\bp_i) (\pi_i\bI - \bM) \mathcal{V} \mathcal{C}_{\mu, \mu}  \bq_i  - \bp_i^T\mathcal{C}_{\mu, \mu} \mathcal{V} (\pi_i\bI - \bM) \mbox{diag}(\bq_i) \boldsymbol{\epsilon}_\mu - \boldsymbol{\epsilon}_\mu^T \mbox{diag}(\bp_i) \cV \bq_i
	\right] 
	\\
	&+ \left[ 
	\bp_i^T \mathcal{V}' (\pi_i\bI-\bM) \mbox{diag}(\bq_i) \boldsymbol{\epsilon}_\mu' \right] \\
	&= \left[ \begin{matrix}
	\boldsymbol{\epsilon}_\mu^T &
	\boldsymbol{\epsilon}_\mu'^T
	\end{matrix}\right] \frac{1}{\gamma_i}\left[\begin{matrix}
	2\mbox{diag}(\bq_i) (\pi_i\bI - \bM)\mathcal{V}\mathcal{C}_{\mu, \mu}\bq_i - \mbox{diag}(\bq_i) \cV \bq_i & 
	\mbox{diag}(\bq_i)(\pi_i\bI - \bM) \mathcal{V}' \bq_i
	\end{matrix}\right] 
	\\ 
	&=  \boldsymbol{\epsilon}^T  \left(\cT \bfe_i \right) = 
	\boldsymbol{\epsilon}^T \cT_i,
	\end{split}
	\end{equation}\normalsize
	where $\boldsymbol{\epsilon}_\mu = \left[\epsilon_{\mu_1}, \cdots, \epsilon_{\mu_q} \right]^T$, ~$\boldsymbol{\epsilon}_\mu' = \left[\epsilon_{\mu_1}', \cdots, \epsilon_{\mu_q}' \right]^T,$
	$\small
	\boldsymbol{\epsilon} = \left[ \begin{matrix}
	\boldsymbol{\epsilon}_\mu \\
	\boldsymbol{\epsilon}_\mu'
	\end{matrix}\right] \in \IC^{2q}$ and $\cT = \left[ \begin{matrix}
	\cT_\mu \\
	\cT_\mu'
	\end{matrix}\right] \in \IC^{2q \times n}$. Additionally, the $i$th column of matrix $\cT$ is denoted with $\cT_i$, while the $i$th columns of matrices $\cT_\mu \in \IC^{q \times n}$ and $\cT_\mu' \in \IC^{q \times n}$, for all $1 \leq i \leq n$, are given by: 
	\begin{equation}
	\begin{cases}{\cT_\mu}_i =  \cT_\mu \bfe_i  = 
	\frac{1}{\gamma_i} 
	2\mbox{diag}(\bq_i) (\pi_i\bI - \bM)\mathcal{V}\mathcal{C}_{\mu, \mu}\bq_i - \frac{1}{\gamma_i} 
	\mbox{diag}(\bq_i) \cV \bq_i, \\[2mm]
	{\cT_\mu'}_i =  \cT_\mu' \bfe_i  = \frac{1}{\gamma_i} 
	\mbox{diag}(\bq_i)(\pi_i\bI - \bM) \mathcal{V}' \bq_i,
	\end{cases}  \text{and} \ \ \cT_i = \left[ \begin{matrix}
	{\cT_\mu}_i \\[2mm]
	{\cT_\mu'}_i
	\end{matrix}\right] \in \IC^{2q}.
	\end{equation}
	As shown above, the eigenvalue perturbation is a linear combination of noise on measurements of the transfer function and of the transfer function derivative. By considering noise in the measurements of the transfer function, the effect on perturbing the poles can be hence quantified by the entries of matrix  $\cT_\mu \in \IC^{q \times n}$. Additionally, by choosing perturbed derivative measurements, the perturbation of the poles is then quantified by the entries of matrix  $\cT_\mu' \in \IC^{q \times n}$.
	
	As before, a simplified is considered to gain better insight into the structure of matrix $\cT$. We analyze a specific case, i.e., by perturbing the $j$th measurement, we seek to quantify the influence on the $i$th pole. Using that $\bfe_j^T \mbox{diag}(\bq_i) = \bq_i^T \bfe_j\bfe_j^T$, it follows that the $(j,i)$ entry of matrix  $\cT_\mu$ is explicitly given by the following formula
	\begin{align}
	\left( \cT_\mu \right)_{j,i} &= \bfe_j^T {\cT_\mu}_i  
	= \bfe_j^T \frac{1}{\gamma_i}\mbox{diag}(\bq_i) \left[ 2(\pi_i\bI - \bM)\mathcal{V}\mathcal{C}_{\mu, \mu} - \cV \right]\bq_i=\frac{1}{\gamma_i}
	\bq_i^T\bfe_j\bfe_j^T \left[2(\pi_i\bI-\bM)
	\mathcal{V} \cC_{\mu, \mu} - \cV \right] \,\bq_i \nonumber \\[2mm]
	&=\frac{1}{\gamma_i}
	\bfe_i^T(\cC_{\mu,\pi})^+ \bfe_j\bfe_j^T
	\left[2(\pi_i\bI-\bM)
	\mathcal{V} \cC_{\mu, \mu} - \cV\right] \,(\cC_{\mu,\pi}^T)^+\bfe_i \nonumber \\[2mm]
	&=\frac{2\left(\pi_i-\mu_j\right)\bv_j}{\gamma_i}
	\left(\bfe_i^T(\cC_{\mu,\pi})^+\bfe_j\right)
	\left(\bfe_j^T(\cC_{\mu, \mu})(\cC_{\mu,\pi}^T)^+\bfe_i\right) - \frac{\bv_j}{\gamma_i} \left(
	\bfe_i^T(\cC_{\mu,\pi})^+ \bfe_j \right) \left( \bfe_j^T
	\,(\cC_{\mu,\pi}^T)^+\bfe_i \right).
		\end{align}
	Similarly, assume now that  the $j^{th}$ derivative measurement is perturbed. We have that the $(j,i)$ entry of matrix  $\cT_\mu'$ is explicitly given as
	\begin{equation}
	\begin{array}{rcl}
	\left( \cT_\mu' \right)_{j,i}
	&= \frac{\left(\pi_i-\mu_j\right)\bv'_j}{\gamma_i}
	\left(\bfe_i^T(\cC_{\mu,\pi})^+\bfe_j\right)^2.
	\end{array}
	\end{equation}
	
	\begin{definition} \label{def:sens_struct_same}
		We define $\eta_{(j, i)}$ as the structured sensitivity for the eigenvalue $\pi_i$ with respect to the perturbing the $j^{th}$ measurement, or on $j^{th}$  derivative measurement,  as follows
		\begin{equation} \label{equ:sen-con2}
		\eta_{(j,i)} = \vert \left(\cT \right)_{j,i} \vert =  \begin{cases}
		\vert \left(\cT_\mu \right)_{j,i} \vert, \ \ \text{if} \ \ 1\leq j \leq q,\\
		\vert \left(\cT_\mu' \right)_{j,i} \vert, \ \ \text{if} \ \  q+1\leq j \leq 2q.
		\end{cases}
		\end{equation}	
	\end{definition}
	All other definitions and formulas introduced in Section \ref{sec:Sens_struct_distinct} follow equivalently, for values $\eta_{(j, i)}$ as in Definition \ref{def:sens_struct_same}. For example, let $\cN_{\mu} \in \IR^{q\times n}$ and  $\cN_{\mu}' \in \IR^{q\times n}$, corresponding to the measurements and to the derivatives, respectively, so that $\cN_{\mu} = \vert \cT_\mu \vert$, and $\cN_{\mu}' = \vert \cT_\mu' \vert$.
	
	\begin{example}\label{ex:4.3}
		Consider the same test case as in Example \ref{ex:4.2}. The vectors of poles and of residues are given by:
		$$
		\begin{cases}
		\bpi=
		\left[\!\begin{array}{rrrrr} -1 & -1-\mathrm{i} & -1+\mathrm{i} & -\frac{1}{2}-\frac{\sqrt{3}}{2}\mathrm{i}& -\frac{1}{2}+\frac{\sqrt{3}}{2}\mathrm{i}\end{array}\!\right],\\
		\bgamma=
		\left[\!\begin{array}{rrrrr} 1& -\frac{1}{2}{}\mathrm{i}& \frac{1}{2}{}\mathrm{i}& -\frac{\sqrt{3}}{3}\mathrm{i}& \frac{\sqrt{3}}{3} \mathrm{i}\end{array}\right].	\end{cases}
		$$
		Choose identical right/left interpolation points as follows:
		$$
		\blambda =\left[\begin{array}{ccccc} \frac{2}{9} & \frac{4}{9} & \frac{6}{9} & \frac{8}{9} & \frac{10}{9} \end{array}\right]= \bmu^T.
		$$
		Then, compute the Cauchy matrices:
		$\cC_{\mu,\pi}$,~
		$\cC_{\mu,\mu}$,~ and
		$\cC_{\pi,\mu}$ and the matrix $\cN_\mu \in \IR^{5 \times 5}$ of sensitivity associated to the measurements, as well as matrix $\cN_\mu' \in \IR^{5 \times 5}$ of sensitivity associated to the derivatives, as follows
		\footnotesize
		$$
		\frac{\cN_\mu}{10^8} =  \left[\begin{array}{rrrrr}
		0.0434  & 0.0897 &  0.0897  &  0.0079  &  0.0079 \\
		0.7100  & 1.5651 &  1.5651  & 0.1340   & 0.1340 \\
		0.9711 &  2.2717 &  2.2717 &  0.1872  &  0.1872 \\
		0.3703 &  0.8327 &  0.8327 &  0.0709  & 0.0709 \\
		0.0354 & 0.0838 &  0.0838 &  0.0068  &  0.0068
		\end{array} \right], \ \ \frac{\cN_\mu'}{10^8} =
		\left[\begin{array}{rrrrr}
		0.0030  &  0.0062  &  0.0062  & 0.0005  &  0.0005 \\
		0.1251 &  0.2708 &  0.2708  & 0.0234  & 0.0234 \\
		0.6091 & 1.3748  & 1.3748 &  0.1158  & 0.1158 \\
		0.5235 & 1.2179  & 1.2179 &  0.1001  & 0.1001 \\
		0.0581 & 0.1381 &  0.1381  & 0.0111  & 0.0111
		\end{array} \right].
		$$
		\normalsize
		By examining the entries of $\cN_\mu$, it follows that the pair of poles $-1\pm i$ are the most sensitive, especially when perturbing the first left measurement value. This can be observed in the $(3,2)$ and $(3,3)$ entries of matrix $\cN_\mu$. Similar conclusion can be drawn when analyzing the entries of matrix  $\cN_\mu'$ (that contains the sensitivities with respect to perturbing the derivative measurements $\bv_i$'). Finally, compute the vector $\bbeta \in \IR^5$ as in (\ref{equ:sen-con1}):
		\begin{equation}
		\bbeta = 10^8 \left[ \begin{matrix}
		1.5005  &  3.4329  &  3.4329   & 0.2868  &  0.2868
		\end{matrix} \right]^T.
		\end{equation}
		We again conclude that the pair of poles $-1\pm i$, is the most sensitive (since $\bbeta_2$ and $\bbeta_3$ are the largest entries). Additionally, note that the highest values computed in this example are in $O(10^8)$, while the ones in Example \ref{ex:4.2} were considerably smaller, i.e., in $O(10^2)$ or $O(10^3)$. 
	\end{example}
	
	

	\subsection{On the choice of the interpolation points and its influence on the sensitivities}

	As shown in the previous sections, the sensitivities $\rho$ and $\eta$ depend on the eigenvectors $\bp$, $\bq$, on the interpolation points $\bmu$, $\blambda$, and on the poles $\bpi$. Moreover, since eigenvectors $\bp$, $\bq$ depend on $\bmu$, $\blambda$, $\bpi$ and also on the residues $\bgamma$, we conclude that sensitivities $\rho$ and $\eta$ are determined by $\bmu$, $\blambda$, $\bpi$ and $\bgamma$. The poles $\bpi$ and residues $\bgamma$ of a given linear system are fixed (system invariants). Hence, the choice of interpolation points will determine sensitivities $\rho$ and $\eta$. Consequently, this will be reflected the robustness of the Loewner model. 
	
	In the Loewner framework the problem of data selection can be split into two sub-problems. The first one stems from the choice of interpolation points, while the second one stems from separating these points into left and right (disjoint) partitions. This is still a complex problem to deal with that is not fully understood. Some progress was made in \cite{KarGosAnt20}, where different types of distributions and separating techniques for the interpolation points were tried.
	
	The sensitivity $\rho$ is directly related to the condition numbers of matrices $\cC_{\mu,\pi}$ and $\cC_{\lambda,\pi}$. Hence, in the case of unstructured perturbation, we can study the problem of choosing interpolation points by exploring the condition number of such generalized Cauchy matrices. The work \cite{Beckermann17} provides a bound for the condition number of the generalized Cauchy matrices for which the denominator part is real. More details are provided in Section \ref{app:1}. However, in many applications, the eigenvalues and interpolation points could be indeed complex numbers. For the sensitivity $\eta$, the problem is more complex because $\eta$ does not depend on the condition number of generalized Cauchy matrices. We illustrate the dependence in $\eta$ by means of numerical examples in Section \ref{sec:4}.

	In what follows we discuss a simplified case. It is assumed that the poles and the interpolation points are well separated into two clusters. The dimension of the Loewner pencil is the same as the order of the original system (denoted with $n$). The Cauchy matrices $\cC_{\mu, \pi}$ and $\cC_{\lambda, \pi}$ are hence square matrices. We also assume that the poles and interpolation points are well separated into two clusters with distance $d$. More precisely, the following relations hold
	\begin{align}\label{equ:distance_bounds}
	\vert \mu_i - \pi_k \vert = O(d),\quad \text{and} \quad\vert \lambda_j - \pi_k \vert = O(d).
	\end{align}
	for $i, j, k \in \left\{1, 2, \ldots, n \right\}$.
	In the sequel we investigate how the distance $d$ affects the sensitivities $\rho$ and $\eta$.
	\begin{lemma}\label{lem:sens_dist}
		Given the unstructured perturbation sensitivity $\rho$ in (\ref{equ:sen-uncon}), the structured perturbation sensitivity $\eta$ in (\ref{equ:sen-con}), and the assumptions made in (\ref{equ:distance_bounds}), the following result hold:
		\begin{align} \label{equ:sen-d}
		\rho(d) = O(d^{4n-4}), \quad \text{and} \quad \eta(d) = O(d^{4n-2}).
		\end{align}
	\end{lemma}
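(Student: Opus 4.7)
The plan is to leverage the explicit factorizations of the Loewner pencil in terms of generalized Cauchy matrices from Lemma \ref{lemma:2} together with the closed-form sensitivity expressions derived earlier. Both $\rho(d)$ and $\eta(d)$ are built entirely from entries of $\cC_{\mu,\pi}$, $\cC_{\lambda,\pi}$, $\cC_{\mu,\lambda}$, the residues $\gamma_i$, and the measurements $\bv_j,\,\bw_j$, so the proof reduces to a careful power-counting in $d$ of each of these ingredients under the hypothesis \eqref{equ:distance_bounds}.

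First I would establish the basic Cauchy scalings. Every entry of $\cC_{\mu,\pi}$ is of the form $1/(\mu_i-\pi_k) = O(d^{-1})$, so $\Vert \cC_{\mu,\pi}\Vert_2 = O(d^{-1})$, and likewise for $\cC_{\lambda,\pi}$. For the inverse I would use the closed form \eqref{equ:inv_cauchy_def}: the numerator $\prod_k(\mu_j-\pi_k)\prod_k(\mu_k-\pi_i)$ is $O(d^{2n})$, the denominator contains exactly one cross-cluster factor $(\mu_j-\pi_i) = O(d)$, while the two within-cluster products $\prod_{k\neq j}(\mu_j-\mu_k)$ and $\prod_{k\neq i}(\pi_k-\pi_i)$ are $O(1)$ in $d$. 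Hence $\Vert\cC_{\mu,\pi}^{-1}\Vert_2 = O(d^{2n-1})$, giving $\kappa(\cC_{\mu,\pi}) = O(d^{2n-2})$, and identically for $\cC_{\lambda,\pi}$. For $\rho$ I would then invoke the upper bound \eqref{equ:bound1}, $\rho_i \le \zeta_i\,\kappa(\cC_{\mu,\pi})\kappa(\cC_{\lambda,\pi})$. Since $\zeta_i$ depends only on the system invariants $(\bpi,\bgamma)$ and not on the interpolation points, it is $O(1)$ in $d$, and the product of the two condition numbers immediately yields $\rho(d) = O(d^{4n-4})$.

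For $\eta$ I would turn to the explicit entry formula \eqref{equ:Smu_exp} for $(\cS_\mu)_{j,i}$ and count powers of $d$ factor by factor: the triple product $\prod_k(\mu_j-\pi_k)(\pi_i-\mu_k)(\pi_i-\lambda_k)$ in the numerator contributes $O(d^{3n})$; the outer denominator products $\prod_{k\neq j}(\mu_j-\mu_k)$ and $\prod_{k\neq i}(\pi_i-\pi_k)^2$ are within-cluster and hence $O(1)$; each summand of the parenthesized sum has $\prod_{k\neq i}(\pi_k-\lambda_m) = O(d^{n-1})$ in the numerator against an $O(1)$ denominator, so the sum as a whole is $O(d^{n-1})$; and the prefactor satisfies $\bv_j = \bH(\mu_j) = \sum_l \gamma_l/(\mu_j-\pi_l) = O(d^{-1})$ while $\gamma_i = O(1)$. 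Multiplying, $(\cS_\mu)_{j,i} = O(d^{-1})\cdot O(d^{3n})\cdot O(d^{n-1}) = O(d^{4n-2})$. The analogous computation for $(\cS_\lambda)_{j,i}$, obtained by swapping the roles of $\mu$ and $\lambda$, produces the same order, whence $\eta(d) = O(d^{4n-2})$.

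The main obstacle is making precise that the within-cluster products such as $\prod_{k\neq j}(\mu_j-\mu_k)$, $\prod_{k\neq m}(\lambda_k-\lambda_m)$, and $\prod_{k\neq i}(\pi_i-\pi_k)$ are genuinely $\Theta(1)$ rather than hiding additional dependencies on $d$; this requires interpreting \eqref{equ:distance_bounds} as asserting that the three clusters $\{\mu_i\}$, $\{\lambda_j\}$, $\{\pi_k\}$ have mutual separation $\Theta(d)$ while their internal diameters and minimum pairwise distances remain bounded away from $0$ and $\infty$ uniformly in $d$. Once this is fixed, the rest of the argument is routine bookkeeping combined with the entrywise-to-$2$-norm estimate $\Vert A\Vert_2 \le n\max_{i,j}|a_{ij}|$ for $n\times n$ matrices, which upgrades the pointwise Cauchy estimates above to genuine spectral-norm bounds.
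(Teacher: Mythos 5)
Your proposal is correct and follows essentially the same power-counting argument as the paper: both rest on the entrywise estimates $\cC_{\mu,\pi}=O(d^{-1})$ and, via the explicit inverse formula (\ref{equ:inv_cauchy_def}), $\cC_{\mu,\pi}^{-1}=O(d^{2n-1})$, combined with $\bv_j=O(d^{-1})$ and $\Vert\IL\Vert=O(d^{-2})$. Your only (minor) deviations are routing the $\rho$ estimate through the condition-number bound (\ref{equ:bound1}) rather than directly through the eigenvector norms in (\ref{equ:sen-uncon}), and making explicit the tacit assumption that the within-cluster separations remain $\Theta(1)$ as $d$ grows --- both of which are harmless refinements of the paper's argument.
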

	\begin{proof}
		\noindent
		Given the condition (\ref{equ:distance_bounds}) then the transfer function and the Loewner matrices follow
		\begin{equation}\label{equ:Cauchy_mat_bounds}
		\bH(\mu) = O(d^{-1}), ~~ \bH(\lambda) = O(d^{-1}), ~~
		\Vert \IL  \Vert = O(d^{-2}), ~~ \Vert \sIL  \Vert =  O(d^{-2}).
		\end{equation}

		Given the inverse of Cauchy matrix in equation (\ref{equ:inv_cauchy_def}), it directly follows that the two relations hold 
		\begin{align}\label{equ:eigvec_mat_bounds}
		\Vert \bp \Vert = \Vert \cC_{\mu,\pi}^{-T}  \mathbf{e} \Vert &= O(d^{2n-1}),~~
		\Vert \bq \Vert = \Vert \cC_{\lambda,\pi}^{-T} \mathbf{e} \Vert = O(d^{2n-1}),
		\end{align}
		Using the results stated in (\ref{equ:Cauchy_mat_bounds}), and in  (\ref{equ:eigvec_mat_bounds}), and the definition of $\rho$ in (\ref{equ:sen-uncon}), one can write that
		$$
		\rho(d) =  O(d^{2n-1}) (O(d^{-2}) + O(d^{-2})) O(d^{2n-1}) = O(d^{4n-4}).
		$$
		Similarly, by using the results in equation (\ref{equ:Cauchy_mat_bounds}), and the formula for $\cS_i$, write that
		\begin{align*} 
		\eta(d) &= O(d^{-1}) O(d^n)  O(d^n)  O(d^n)  O(d^{n-1}) = O(d^{4n-2}).
		\end{align*}
	\end{proof}
	
	\vspace{-2mm}
	
	As shown in Lemma \ref{lem:sens_dist}, when $n$ is large, the sensitivities $\rho$ and $\eta$ will increase fast when the distance $d$ is enlarged. So, it is better to choose interpolation points that are not far away from the poles of the system. However, this does not necessarily imply that it is desired to take all measurements close to the poles. For example, if some of the interpolation points are very close to the poles, the values of the measured data could be indeed very large and hence yield an ill-conditioned Loewner pencil.

	\section{Numerical examples}\label{sec:4}
	\label{sec:numerics}
	
	In this section, we provide a numerical study that includes two examples from \cite{Embree19}.
	
	\subsection{Example 1}\label{sec:4.1}
	
	In this section we analyze the first example provided in \cite{Embree19}. Consider a linear system with realization
	$$
	\bA = \left[ \begin{matrix}
	-1.1 & ~1 \\
	~1 & -1.1
	\end{matrix}\right], ~~
	\bB = \left[ \begin{matrix}
	0 \\
	1 
	\end{matrix}\right], ~~
	\bC = \left[ \begin{matrix}
	0 & 1 
	\end{matrix}\right].
	$$
	The poles are given by $\pi_1 = -2.1$ and $\pi_2 = -0.1$.~ 
	Four settings corresponding to the choice of interpolation points are shown in Tab.\;\ref{table:Example 1-1}. Note that the right points are chosen on the real axis in all settings, while the left points are chosen on the imaginary axis for the first three settings. Lastly, for setting 4, all interpolation points are real. 
	
	\begin{table}[h!]
		\scriptsize
		\centering
		\begin{tabular}{|c|c|c|c|c|}
			\hline
			Setting & $\lambda_1$ & $\lambda_2$ & $\mu_1$ & $\mu_2$ \\
			\hline
			1 & 0.00 & 1.00 & 0.00+1.00i & 0.00-1.00i \\
			\hline
			2 & 0.25 & 0.75 & 0.00+2.00i & 0.00-2.00i \\
			\hline
			3 & 0.40 & 0.60 & 0.00+4.00i & 0.00-4.00i \\
			\hline
			4 & 8.00 & 9.00 & 10.00 & 11.00 \\
			\hline
		\end{tabular}
		\caption{\small The four settings for choosing interpolation points.}
		\label{table:Example 1-1}
	\end{table}
	
	The condition number of Cauchy matrices, the sensitivities $\rho_i$ and their bound are shown in Tab.\;\ref{table:Example 1-2}. It is easy to see that the sensitivity $\rho_i$ (corresponding to eigenvalue $\pi_i$ for $i = 1,2$) is strongly related to the condition numbers of the Loewner and generalized Cauchy matrices. An increase in the condition numbers is reflected also in the sensitivity values (see, e.g., for setting 4). This behavior is in accordance to the results provided in Section \ref{sec:sens}.

	
	\begin{table}[h!]
		\scriptsize
		\centering
		\begin{tabular}{|c|c|c|c|c|c|c|}
			\hline
			Setting & cond($\cC_{\mu,\pi}$) & cond($\cC_{\lambda,\pi}$) & $\rho_1$ $\left(\pi_1 = -2.1 \right)$ & $\rho_2$ $\left(\pi_2 = -0.1 \right)$ & $bound(\rho_1)$ & $bound(\rho_2)$ \\
			\hline
			1 & 2.860e+00 & 3.619e+01 & 2.202e+02 & 5.609e-01 & 4.348e+02 & 2.278e+02 \\
			\hline
			2 & 2.740e+00 & 1.958e+01 & 1.049e+02 & 2.191e+00 & 2.253e+02 & 1.180e+02 \\
			\hline
			3 & 4.321e+00 & 3.741e+01 & 2.710e+02 & 1.111e+01 & 6.789e+02 & 3.556e+02 \\
			\hline
			4 & 2.717e+02 & 1.869e+02 & 9.091e+04 & 2.077e+04 & 2.133e+05 & 1.117e+05 \\
			\hline
		\end{tabular}
		\caption{\small Condition numbers and sensitivities $\rho_i$.}
		\label{table:Example 1-2}
	\end{table}

	The pseudospectra computed for each of the four settings are shown in Fig.\;\ref{fig:Example 1-1}. The results match the expectations for sensitivity $\rho$. More precisely, in Setting 4, the slope of the pseudospectrum around the eigenvalues is much smaller than the slope for the other settings. The sensitivity $\rho$ in Setting 4 is also much larger than for other settings. In all the depicted pseudospectra, it is found that the eigenvalue at $-2.1$ is more sensitive than the eigenvalue at $-0.1$. This is also shown for sensitivity $\rho$, i.e., it follows that $\rho_1 \gg \rho_2$.
	
	\begin{figure}[h!]
		\centering
		\includegraphics[width=0.7\linewidth]{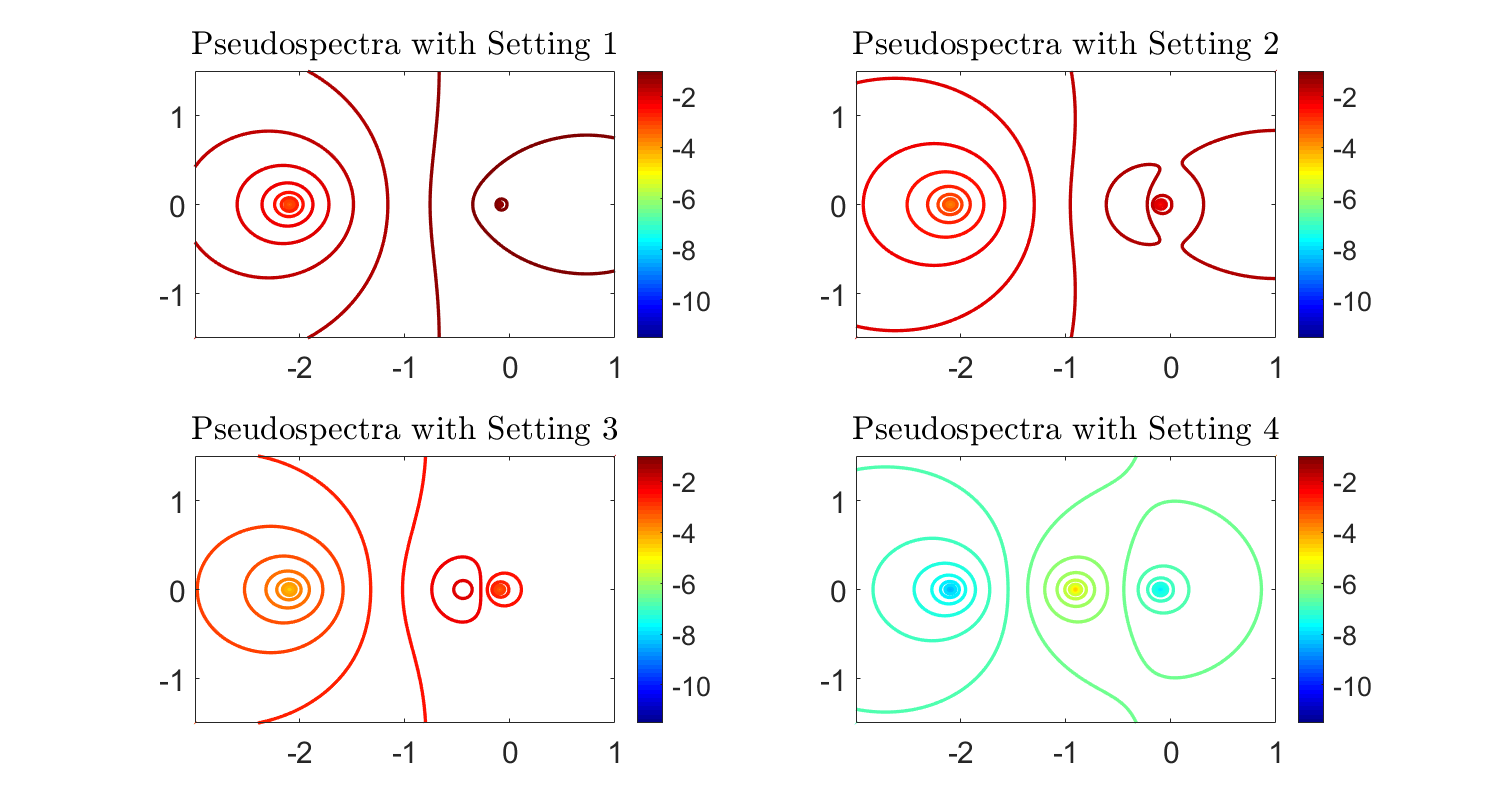}
		\vspace{-4mm}
		\caption{\small Pseudospectra of the Loewner pencils.}
		\label{fig:Example 1-1}	
	\end{figure}
	
	Next, we modify the choice of left and right interpolation points. These will be chosen as shown in Tab.\;\ref{table:Example 1-3}. More precisely, all points will be on the real axis with increasing value for each setting $k \in \{1,2,3\}$. Moreover, the differences $\lambda_2 - \lambda_1$ and $\mu_2-\mu_1$ are kept the same for all three settings (equal to $2$). In Tab.\;\ref{table:Example 1-3}, we illustrate the relationship between sensitivities $\rho$, $\eta$ and $d$, i.e., the distance from the eigenvalue cluster to the interpolation points cluster. By following the results in (\ref{equ:sen-d}), we can indeed show that $\eta = O(d^6)$ and $\rho = O(d^4)$. The results presented in Tab.\;\ref{table:Example 1-3} indeed match the theoretical prediction.
	
	\begin{table}[h!]
		\scriptsize
		\centering
		\begin{tabular}{|c|c|c|c|c|c|c|c|}
			\hline
			$\lambda_1$ & $\lambda_2$ & $\mu_1$ & $\mu_2$ & $\rho_1$ ($p_1$=-0.1) & $\rho_2$ ($p_2$=-2.1) & $\eta_1$ ($p_1$=-0.1) & $\eta_2$ ($p_2$=-2.1) \\
			\hline
			0.00 & 2.00 & 1.00 & 3.00 & 2.881e+00 & 1.295e+03 & 2.848e+00 & 2.758e+02 \\
			\hline
			10.00 & 12.00 & 11.00 & 13.00 & 1.124e+04 & 4.551e+04 & 1.144e+06 & 1.855e+06 \\
			\hline
			100.00 & 102.00 & 101.00 & 103.00 & 6.415e+07 & 1.797e+08 & 4.220e+11 & 4.475e+11 \\
			\hline
		\end{tabular}
		\caption{Sensitivities for different choices of interpolation points.}
		\label{table:Example 1-3}
	\end{table}

	\normalsize

	\subsection{Example 2}\label{sec:4.2}
	
	In this section we analyze the second example provided in \cite{Embree19}. Consider a linear system with realization given by matrices
	$$
	\bA = \mbox{diag}(-1,-2,\cdots, -10), ~~
	\bB = \left[ \begin{matrix}
	1, 1, \cdots, 1
	\end{matrix}\right]^T, ~~
	\bC = \left[ \begin{matrix}
	1, 1, \cdots, 1
	\end{matrix}\right].
	$$
	The poles of the system are $\{-1, -2, \ldots, -10\}$ corresponding to residues $\{ 1, 1, \ldots, 1\}$. As given in Tab.\;\ref{table:Example 2-1}, we choose two settings for the interpolations points. Note that both settings share the same interpolation points, chosen on the negative real axis inside the interval $(-11,0)$. The difference between them is given by ordering  and by separating into the left/right partitions. Note that the left/right interpolation points corresponding to Setting 1 are interlaced, while the left/right interpolation points in Setting 2 are completely separated (half-half).
	
	Additionally, the sensitivities $\rho$ and $\eta$  corresponding to each of the two separation settings, are also listed in Tab.\;\ref{table:Example 2-1}.  An interesting numerical result can be observed: sensitivity $\rho$ in Setting 2 is much larger than that in Setting 1 which implies the model in Setting 2 is more ill-conditioned than the model in Setting 1. However, we also note that the sensitivity $\eta$ in Setting 2 is comparable to that of Setting 1. This means that the Loewner model in Setting 1 is as robust as that in Setting 2, with respect to perturbing the data. 
	
	\begin{table}[h!]
		\centering
		\scriptsize
		\begin{tabular}{|c|c|c|c|c|}
			\hline
			$\lambda$ & $\mu$ & $\pi$ & $\rho$ & $\eta$ \\
			\hline
			-10.25 & -9.75 & -10.000 & 2.205e+01 & 2.098e-01 \\
			\hline
			-9.25 & -8.75 & -9.000 & 1.947e+01 & 1.836e-01 \\
			\hline
			-8.25 & -7.75 & -8.000 & 1.812e+01 & 1.711e-01 \\
			\hline
			-7.25 & -6.75 & -7.000 & 1.697e+01 & 1.647e-01 \\
			\hline
			-6.25 & -5.75 & -6.000 & 1.590e+01 & 1.619e-01 \\
			\hline
			-5.25 & -4.75 & -5.000 & 1.487e+01 & 1.619e-01 \\
			\hline
			-4.25 & -3.75 & -4.000 & 1.387e+01 & 1.647e-01 \\
			\hline
			-3.25 & -2.75 & -3.000 & 1.292e+01 & 1.711e-01 \\
			\hline
			-2.25 & -1.75 & -2.000 & 1.208e+01 & 1.836e-01 \\
			\hline
			-1.25 & -0.75 & -1.000 & 1.185e+01 & 2.098e-01 \\
			\hline
		\end{tabular}
		\scriptsize
		\begin{tabular}{|c|c|c|c|c|}
			\hline
			$\lambda$ & $\mu$ & $\pi$ & $\rho$ & $\eta$ \\
			\hline
			-5.25 & -10.25 & -10.000 & 5.857e+06 & 2.098e-01 \\
			\hline
			-4.75 & -9.75 & -9.000 & 9.429e+06 & 1.836e-01 \\
			\hline
			-4.25 & -9.25 & -8.000 & 6.653e+06 & 1.711e-01 \\
			\hline
			-3.75 & -8.75 & -7.000 & 2.704e+06 & 1.647e-01 \\
			\hline
			-3.25 & -8.25 & -6.000 & 1.578e+06 & 1.619e-01 \\
			\hline
			-2.75 & -7.75 & -5.000 & 1.447e+06 & 1.619e-01 \\
			\hline
			-2.25 & -7.25 & -4.000 & 2.082e+06 & 1.647e-01 \\
			\hline
			-1.75 & -6.75 & -3.000 & 4.285e+06 & 1.711e-01 \\
			\hline
			-1.25 & -6.25 & -2.000 & 5.042e+06 & 1.836e-01 \\
			\hline
			-0.75 & -5.75 & -1.000 & 2.571e+06 & 2.098e-01 \\
			\hline
		\end{tabular}\\
		\caption{Poles and sensitivities of Loewner models of Setting 1 and 2.}
		\label{table:Example 2-1}
	\end{table}

	Next, we generate 1000 Loewner models for measurements corrupted by Gaussian noise with $\sigma = 0.3$. For each trial, we display in Fig.\;\ref{fig:Example 2-2} the poles for both noisy and noiseless case in each of the two settings. The results depicted there show that the Loewner models constructed from noisy data by Setting 1 have a similar distribution of eigenvalues compared with the models in Setting 2. This numerical example illustrates that it is indeed meaningful to define the sensitivity with respect to both unstructured and structured perturbations.
	
	\begin{figure}[h!]
		\centering
		\includegraphics[width=0.8\linewidth]{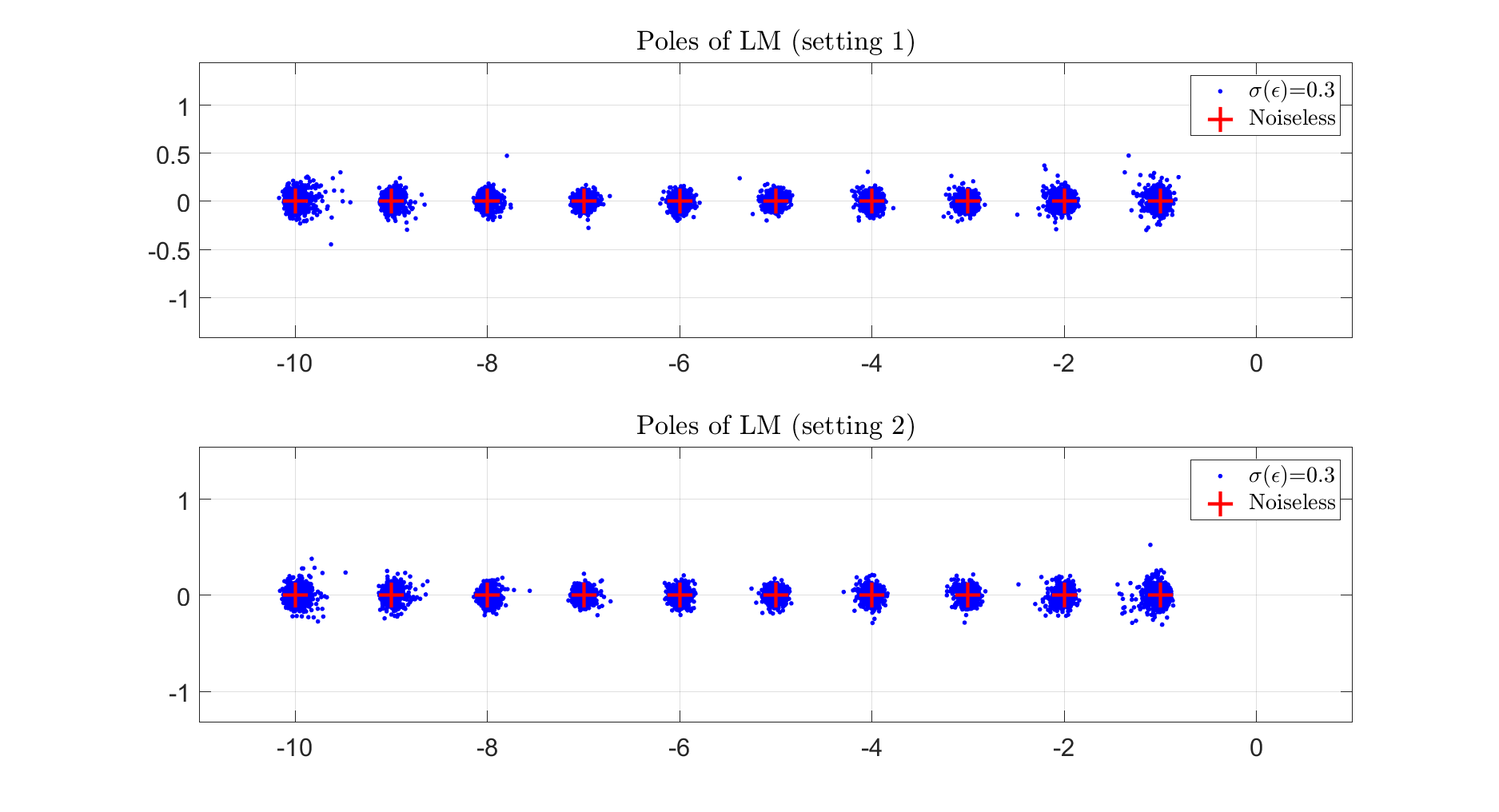}
		\vspace{-4mm}
		\caption
		{The distribution of poles of Loewner system models constructed from noisy data.}
		\label{fig:Example 2-2}
	\end{figure}
	
	As shown in Tab.\;\ref{table:Example 2-2}, the condition numbers for the Cauchy matrices defined for Setting 2 are considerably larger than those of the matrices defined for Setting 1 (approximately 5 to 6 orders of magnitude). Next, compute the $\ell_2$-norm of the sensitivities $\boldsymbol{\rho}$. This quantity takes into account the unstructured sensitivities with respect to all the poles $\pi_i$. Additionally, compute the upper bound on $\vert \boldsymbol{\rho} \vert_2$ provided in (\ref{equ:bound2}). Note that this bound is much tighter for Setting 1.

	\begin{table}[h!]
		\centering
		\scriptsize
		\begin{tabular}{|c|c|c|c|c|}
			\hline
			Setting  & $\kappa(\cC_{\mu,\pi})$ & $\kappa(\lambda, \pi)$ & $\Vert \boldsymbol{\rho} \Vert_2$& 
			$bound(\Vert \boldsymbol{\rho} \Vert_2)$
			\\
			\hline 
			1  & 1.217e+00 & 1.217e+00 & 5.100e+01 &  7.385e+01\\
			\hline
			2  & 1.771e+06 & 1.771e+06 & 1.530e+07& 1.563e+14\\
			\hline
		\end{tabular}
		\caption{Condition numbers of different matrices and the coefficient $\Vert \boldsymbol{\rho} \Vert_2$.}
		\label{table:Example 2-2}
	\end{table} 
	
	The pseudospectra in Fig. \ref{fig:Example 2-1} also shows that the eigenvalues in Setting 2 are more sensitive to noise than those in Setting 1. This example illustrates how separating the interpolation points (into left and right subsets) can greatly affect the condition numbers of the Loewner pencil and generalized Cauchy matrices. It is to be concluded that interlaced data separation seems to be more advantageous than splitting the interpolation data in half, in order to avoid ill-conditioning of the Loewner model.

	\begin{figure}[h!]
		\centering
		\includegraphics[width=1\linewidth]{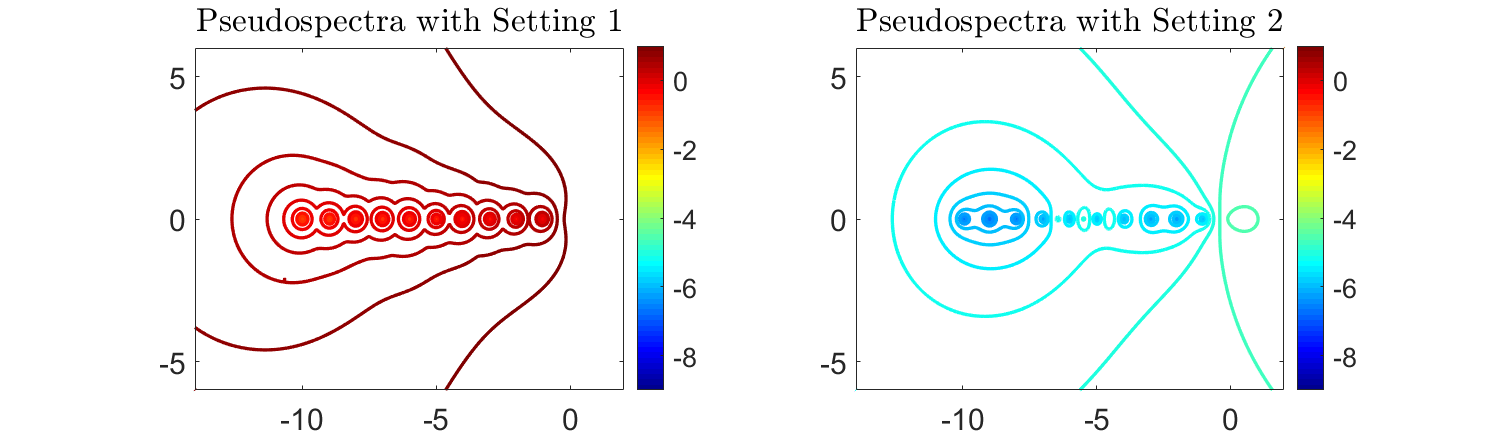}
		\caption{Pseudospectra of the Loewner pencils.}
		\label{fig:Example 2-1}
	\end{figure}
	
	In Fig. \ref{fig:EX2_eta_1}, we display a heat map of sensitivity values $\eta_{(i,j)}$ for the two settings described before. Hence the entries of matrices $\cN_\mu$ and $\cN_\lambda$ are displayed. This shows that the sensitivity values are large whenever the measurements are closer to the poles. This means that measurements that are close to poles will greatly affect the perturbation with respect to those particular poles. 
	
	\begin{figure}	
		\hspace{-14mm}
		\includegraphics[width=1.2\linewidth]{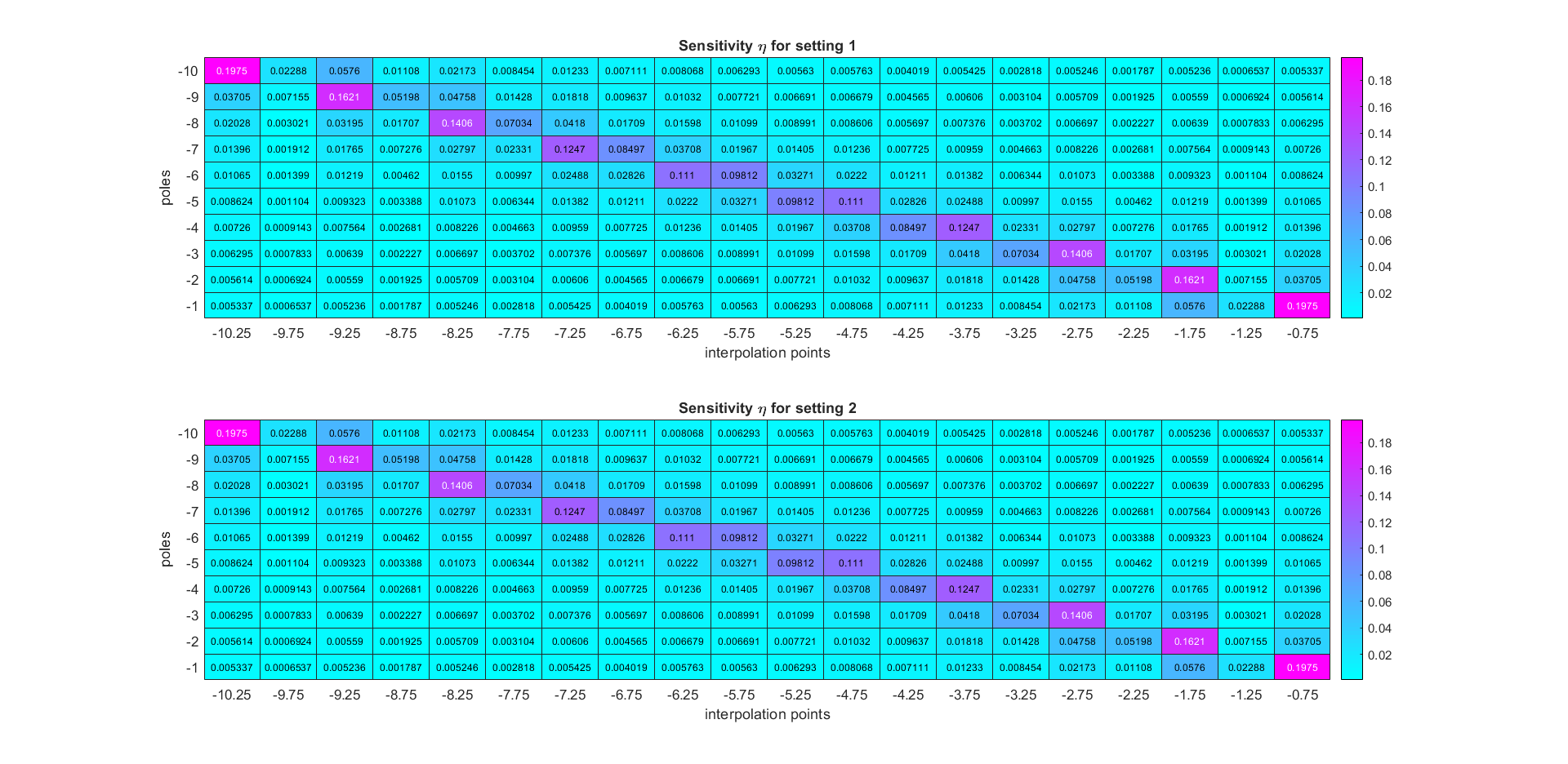}
		\vspace{-10mm}
		\caption{Structured sensitivity $\eta_{(i,j)}$ for the first two settings; interpolation points are  on the x-axis, while the poles are on the y-axis.}
		\label{fig:EX2_eta_1}
	\end{figure}

	\subsubsection{Bounds on the singular values of Cauchy and Loewner matrices} \label{app:1}
	
	In \cite{Beckermann17}, a bound on the decay of the singular values for matrices with displacement structure is provided. Such matrices satisfy Sylvester equations. Indeed, the Cauchy and Loewner matrices are two types of matrices with displacement structure. For the Cauchy matrix $\cC_{x,y} \in \mathbb{C}^{m \times n}$ ($m \ge n$) where the entries of $x$ are located in the interval $\left[a, b\right]$ and the entries of $y$ are located in the interval $\left[c, d\right]$, the following bound holds (provided that the two intervals are disjoint)
	\begin{equation} \label{equ:bound_cauchy}\small
	\sigma_{j+k} \left(\cC_{x,y}\right) \le 
	4 \left[ \mathrm{exp}\left(\frac{\pi^2}{4\mu\left(1/\sqrt{\gamma}\right)}\right) \right]^{-2k}
	\sigma_{j} \left(\cC_{x,y}\right), ~~~~~ 1\le j+k \le n
	\end{equation}
	where $\gamma = \vert \left( c-a \right)\left( d-b \right) / 
	\left( \left( c-b \right) \left( d-a \right) \right) \vert$ is the absolute value of the cross-ratio of $a,b,c,d$ and $\mu$ is the Grotzsch ring function.
	Thus, an upper bound for the decay of singular values is obtained which is independent from the numerators of Cauchy matrix entries. In our work, numerators of entries are residues of the poles of the system. Tab.\;\ref{table:Example 1-4} shows the experimental result of the bound on example \ref{sec:4.1}. Fig. \ref{fig:Example4_2_1} shows the experimental results of the bound on example \ref{sec:4.2}.
	\begin{table}[h]
		\scriptsize
		\centering
		\begin{tabular}{|c|c|c|c|c|c|c|c|}
			\hline
			$\mu_1$ & $\mu_2$ & $\lambda_1$ & $\lambda_2$ & $\kappa(\cC_{\tiny\mbox{L}})$ & bound($\kappa(\cC_{\tiny\mbox{L}})$) & $\kappa(\cC_{\tiny\mbox{R}})$ & bound($\kappa(\cC_{\tiny\mbox{R}})$) \\
			\hline
			1.0 & 3.0 & 0.0 & 2.0 & 1.439e+01 & 7.443e+00 & 4.541e+01 & 1.732e+00 \\
			\hline
			11.0 & 13.0 & 10.0 & 12.0 & 1.737e+02 & 1.696e+02 & 1.485e+02 & 1.444e+02 \\
			\hline
			101.0 & 103.0 & 100.0 & 102.0 & 1.063e+04 & 1.063e+04 & 1.043e+04 & 1.042e+04 \\
			\hline
		\end{tabular}
		\caption{Condition numbers and sensitivity $\rho$}
		\label{table:Example 1-4}
	\end{table}\\[-4mm]

	A similar bound holds for Loewner matrices. Let $\IL_{x,y}\in \mathbb{C}^{m \times n}$ be a Loewner matrix, where $m \ge n$, the following bound for the decay of singular values was given in \cite{Beckermann17}:
	\begin{equation} \label{equ:bound_loewner}\small
	\sigma_{j+2k} \left(\IL_{x,y}\right) \le 
	4 \left[ \mathrm{exp}\left(\frac{\pi^2}{4\mu\left(1/\sqrt{\gamma}\right)}\right) \right]^{-2k}
	\sigma_{j} \left(\IL_{x,y}\right), ~~ 1\le j+2k \le n.
	\end{equation}
	
	The figure below provide results of the experiment on the bound of example \ref{sec:4.2}.
	
	\begin{figure}[h!]
		\centering
		\hspace{-8mm}
		\includegraphics[width=0.54\linewidth]{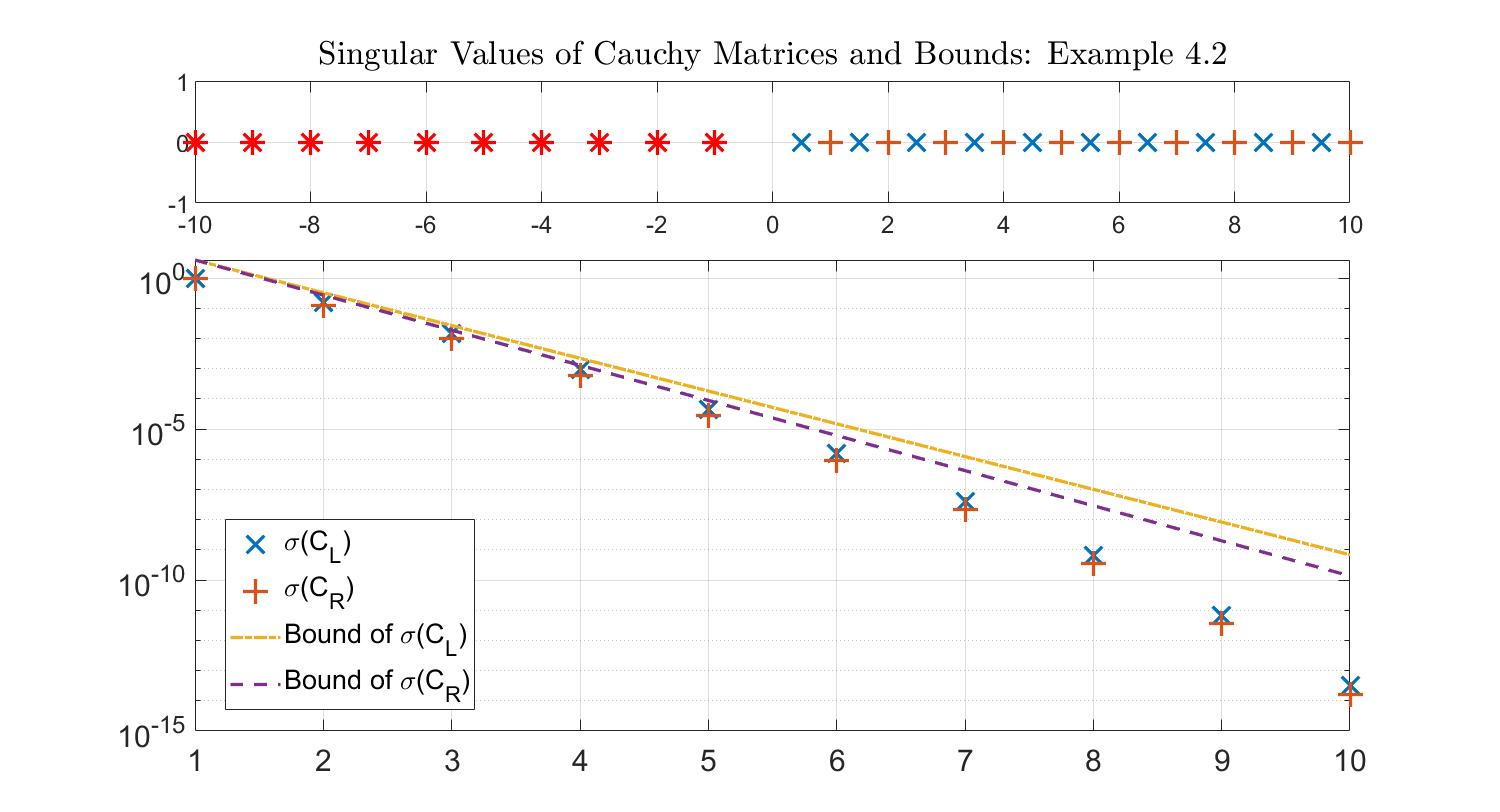} \hspace{-8mm}
		\includegraphics[width=0.54\linewidth]{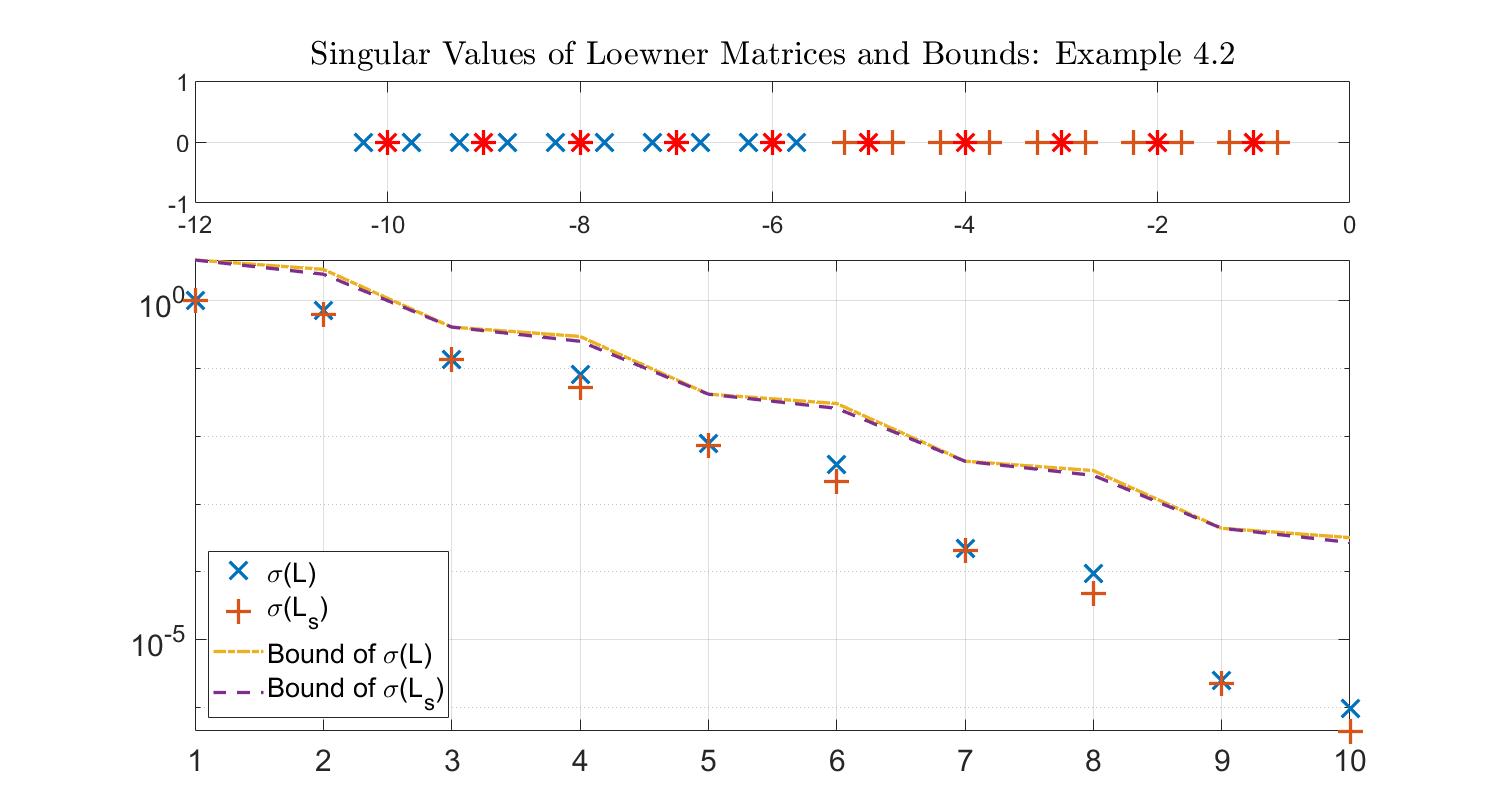}
		\vspace{-2mm}
		\caption{Singular value decay of Cauchy and Loewner matrices with the corresponding bounds for example \ref{sec:4.2}. The red asterisks represent poles of the system, blue crosses represent left interpolation points $\mu$, and orange plus signs represent right interpolation points $\lambda$ (for two different settings provided in Section \ref{sec:4.2}).}
		\label{fig:Example4_2_1}
	\end{figure}

	\section{Conclusion}
	\label{sec:conc}
	
	In this paper, we have presented a factorization of the Loewner pencil which yields an explicit generalized eigenvalue decomposition of the pencil. Based on this decomposition and on results from perturbation theory of eigenvalues corresponding to a matrix pencil, the sensitivities of eigenvalues of the Loewner pencil are defined and analyzed. This is done with respect to both unstructured and structured types of perturbations.

	It was found that the sensitivity of eigenvalues with unstructured perturbation is related to the condition numbers of the generalized Cauchy matrices that appear in the factorization of the Loewner pencil. This can indeed represent a useful tool for eigenvalue sensitivity analysis, together with the pseudospectrum.

	The sensitivity of eigenvalues with structured perturbations is an important tool to be used in the case of noisy data. The relationship between the perturbation of eigenvalues and the perturbation in the data is explored. One issue dealt with in this work was to analyze the robustness of the Loewner model with respect to perturbed data. Using eigenvalue sensitivity analysis, we showed how the choice of interpolation points affects the Loewner model. 
	

	In this work, we have explored some meaningful developments of the eigenvalue sensitivity analysis for the Loewner pencil. Nevertheless, there are still some open problems to be dealt with. For example, one needs to further investigate the problem of choosing the interpolation points. Additionally, the study of sensitivity for eigenvalues with multiplicity could also represent a topic of further research. Finally, the sensitivity computation procedure needs to be expanded for the case of redundant data. Consequently, the sensitivity analysis can also be used for reduced-order models constructed within the Loewner framework.
	
	\small
	
\bibliographystyle{plainurl}
	\bibliography{bibliography}
	
	\normalsize

\end{document}